\author{A.A. Vasil'eva}
\title{Widths of weighted Sobolev classes with weights that are
functions of distance to some $h$-set: some limiting cases}
\date{}
\begin{document}

\maketitle

\newenvironment{Biblio}{%
                  \renewcommand{\refname}{\footnotesize REFERENCES}%
                  }

\def\inff{\mathop{\smash\inf\vphantom\sup}}
\renewcommand{\le}{\leqslant}
\renewcommand{\ge}{\geqslant}
\newcommand{\sgn}{\mathrm {sgn}\,}
\newcommand{\inter}{\mathrm {int}\,}
\newcommand{\dist}{\mathrm {dist}}
\newcommand{\supp}{\mathrm {supp}\,}
\newcommand{\R}{\mathbb{R}}
\renewcommand{\C}{\mathbb{C}}
\newcommand{\Z}{\mathbb{Z}}
\newcommand{\N}{\mathbb{N}}
\newcommand{\Q}{\mathbb{Q}}
\theoremstyle{plain}
\newtheorem{Trm}{Theorem}
\newtheorem{trma}{Theorem}

\newtheorem{Def}{Definition}
\newtheorem{Cor}{Corollary}
\newtheorem{Lem}{Lemma}
\newtheorem{Rem}{Remark}
\newtheorem{Sta}{Proposition}
\newtheorem{Sup}{Assumption}
\newtheorem{Exa}{Example}
\renewcommand{\proofname}{\bf Proof}
\renewcommand{\thetrma}{\Alph{trma}}

\section{Introduction}

Let $X$, $Y$ be sets, and let $f_1$, $f_2:\ X\times Y\rightarrow
\mathbb{R}_+$. We write $f_1(x, \, y)\underset{y}{\lesssim} f_2(x,
\, y)$ (or $f_2(x, \, y)\underset{y}{\gtrsim} f_1(x, \, y)$) if
for any $y\in Y$ there exists $c(y)>0$ such that $f_1(x, \, y)\le
c(y)f_2(x, \, y)$ for any $x\in X$; $f_1(x, \,
y)\underset{y}{\asymp} f_2(x, \, y)$ if $f_1(x, \, y)
\underset{y}{\lesssim} f_2(x, \, y)$ and $f_2(x, \,
y)\underset{y}{\lesssim} f_1(x, \, y)$.

Let $\Omega \subset \R^d$ be a bounded domain (i.e., a bounded
open connected set), and let $g$, $v:\Omega\rightarrow (0, \,
\infty)$ be measurable functions. For each measurable
vector-valued function $\psi:\ \Omega\rightarrow \R^l$,
$\psi=(\psi_k) _{1\le k\le l}$, and for any $p\in [1, \, \infty]$
we set
$$
\|\psi\|_{L_p(\Omega)}= \Big\|\max _{1\le k\le l}|\psi _k |
\Big\|_p=\left(\int \limits_\Omega \max _{1\le k\le l}|\psi _k(x)
|^p\, dx\right)^{1/p}.
$$
Let $\overline{\beta}=(\beta _1, \, \dots, \, \beta _d)\in
\Z_+^d:=(\N\cup\{0\})^d$, $|\overline{\beta}| =\beta _1+
\ldots+\beta _d$. For any distribution $f$ defined on $\Omega$ we
write $\displaystyle \nabla ^r\!f=\left(\partial^{r}\! f/\partial
x^{\overline{\beta}}\right)_{|\overline{\beta}| =r}$ (here partial
derivatives are taken in the sense of distributions), and denote
by $l_{r,d}$ the number of components of the vector-valued
distribution $\nabla ^r\!f$. We set
$$
W^r_{p,g}(\Omega)=\left\{f:\ \Omega\rightarrow \R\big| \; \exists
\psi :\ \Omega\rightarrow \R^{l_{r,d}}\!:\ \| \psi \|
_{L_p(\Omega)}\le 1, \, \nabla ^r\! f=g\cdot \psi\right\}
$$
\Big(we denote the corresponding function $\psi$ by
$\displaystyle\frac{\nabla ^r\!f}{g}$\Big),
$$
\| f\|_{L_{q,v}(\Omega)}{=}\| f\|_{q,v}{=}\|
fv\|_{L_q(\Omega)},\qquad L_{q,v}(\Omega)=\left\{f:\Omega
\rightarrow \R| \; \ \| f\| _{q,v}<\infty\right\}.
$$
We call the set $W^r_{p,g}(\Omega)$ a weighted Sobolev class.
Observe that $W^r_{p,1}(\Omega)=W^r_p(\Omega)$ is a non-weighted
Sobolev class. For properties of weighted Sobolev spaces and their
generalizations, we refer the reader to the books \cite{triebel,
kufner, edm_trieb_book, triebel1, edm_ev_book, myn_otel} and the
survey paper \cite{kudr_nik}.

Let $(X, \, \|\cdot\|_X)$ be a normed space, let $X^*$ be its
dual, and let ${\cal L}_n(X)$, $n\in \Z_+$, be the family of
subspaces of $X$ of dimension at most $n$. Denote by $L(X, \, Y)$
the space of continuous linear operators from $X$ into a normed
space $Y$. Also, by ${\rm rk}\, A$ denote the dimension of the
image of an operator $A\in L(X, \, Y)$, and by $\| A\|
_{X\rightarrow Y}$, its norm.

By the Kolmogorov $n$-width of a set $M\subset X$ in the space
$X$, we mean the quantity
$$
d_n(M, \, X)=\inf _{L\in {\cal L}_n(X)} \sup_{x\in M}\inf_{y\in
L}\|x-y\|_X,
$$
by the linear $n$-width, the quantity $$\lambda _n(M, \, X) =\inf
_{A\in L(X, \, X), \, {\rm rk} A\le n}\sup _{x\in M}\| x-Ax\| _X,
$$
and by the Gelfand $n$-width, the quantity
$$
d^n(M, \, X)=\inff _{x_1^*, \, \dots, \, x_n^*\in X^*} \sup
\{\|x\|:\; x\in M, \, x^*_j(x)=0, \; 1\le j\le n\}=
$$
$$
=\inff _{A\in L(X, \, \R^n)}\sup \{\|x\|:\; x\in M\cap \ker A\}.
$$

In estimating Kolmogorov, linear, and Gelfand widths we set,
respectively, $\vartheta_l(M, \, X)=d_l(M, \, X)$ and $\hat q=q$,
$\vartheta_l(M, \, X)=\lambda_l(M, \, X)$ and $\hat q=\min\{q, \,
p'\}$, $\vartheta_l(M, \, X)=d^l(M, \, X)$ and $\hat q=p'$.

In the 1960-1980s problems concerning the values of the widths of
function classes in $L_q$ and of finite-dimensional balls $B_p^n$
in $l_q^n$ were intensively studied. Here $l_q^n$ $(1\le q\le
\infty)$ is the space $\R^n$ with the norm
$$
\|(x_1, \, \dots , \, x_n)\| _q\equiv\|(x_1, \, \dots , \, x_n)\|
_{l_q^n}= \left\{
\begin{array}{l}(| x_1 | ^q+\dots+ | x_n | ^q)^{1/q},\text{ if
}q<\infty ,\\ \max \{| x_1 | , \, \dots, \, | x_n |\},\text{ if
}q=\infty ,\end{array}\right .
$$
$B_p^n$ is the unit ball in $l_p^n$. For more details, see
\cite{tikh_nvtp, itogi_nt, kniga_pinkusa}.

Let us formulate the result on widths of non-weighted Sobolev
classes on a cube in the space $L_q$. We set
\begin{align}
\label{tpqrd_def} \theta_{p,q,r,d}=\left\{ \begin{array}{l}
\frac{\delta}{d}-\left(\frac 1q-\frac 1p\right)_+, \quad
\text{if}\quad p\ge q \quad\text{or}\quad \hat q\le 2,
\\ \min \Bigl\{\frac{\delta}{d}+\min \bigl\{\frac 1p-\frac 1q,
\, \frac 12-\frac{1}{\hat q}\bigr\}, \, \frac{\hat q
\delta}{2d}\Bigr\}, \quad \text{if}\quad p<q, \; \hat
q>2.\end{array}\right.
\end{align}

\begin{trma}
\label{sob_dn} {\rm (see, e.g., \cite{birm, de_vore_sharpley,
bib_kashin, bibl6}).} Let $r\in \N$, $1\le p, \, q\le \infty$,
$\displaystyle \frac rd +\frac 1q-\frac 1p>0$. In addition, we
suppose that
\begin{align}
\label{rdp_kolm} \frac{\delta}{d} +\min \left\{\frac 1p-\frac 1q,
\, \frac 12-\frac{1}{\hat q}\right\}\ne \frac{\hat q \delta}{2d}
\end{align}
in the case $p<q$ and $\hat q>2$. Then
$$
\vartheta_n(W^r_p([0, \, 1]^d), \, L_q([0, \, 1]^d))
\underset{r,d,p,q}{\asymp}n^{-\theta_{p,q,r,d}}.
$$
\end{trma}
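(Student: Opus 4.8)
I would prove this classical estimate by the Birman--Solomyak discretization scheme, reducing it to the known asymptotics of the widths of finite-dimensional balls $B_p^N$ in $l_q^N$.

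\emph{Discretization.} Partition $[0,1]^d$ into the dyadic subcubes of side $2^{-j}$, $j\in\Z_+$, there being $2^{jd}$ of them at level $j$. For $f\in W^r_p([0,1]^d)$ let $S_jf$ be a near-best piecewise-polynomial interpolant of coordinatewise degree $<r$ on the level-$j$ mesh, with $S_{-1}=0$, so that $f=\sum_{j\ge0}\Delta_jf$, $\Delta_jf:=S_jf-S_{j-1}f$, the $j$th block depending on $\asymp2^{jd}$ parameters. Combining the standard local polynomial approximation estimate on a cube of side $h$ (Bramble--Hilbert, applicable because $\frac rd>\frac1p-\frac1q$) with H\"older's inequality and $\|\nabla^rf\|_{L_p([0,1]^d)}\le1$ to sum the local contributions, one finds that, after a bounded change of variables and up to equivalence of norms, the block $\Delta_jf$ ranges over a fixed multiple of $N_j^{-\varrho}B_p^{N_j}$ realized inside $l_q^{N_j}$, where $N_j\asymp2^{jd}$ and $\varrho:=\frac rd+\frac1q-\frac1p$ is the natural Jackson exponent. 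Hence the three widths on the left of the claimed relation are squeezed, up to $(r,d,p,q)$-constants, between expressions assembled from the finite-dimensional quantities $\vartheta_m(B_p^{N},l_q^{N})$.

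\emph{Upper bound.} Allocate $m_j\ge0$ degrees of freedom to level $j$ with $\sum_jm_j\le n$; by subadditivity of $\vartheta_n$ over a direct sum, $\vartheta_n(W^r_p,L_q)\lesssim\sum_jN_j^{-\varrho}\vartheta_{m_j}(B_p^{N_j},l_q^{N_j})$, into which one inserts the sharp finite-dimensional estimates: the Stesin/Pietsch formulas for $p\ge q$, the Stesin--Pietsch--Kashin bounds when $\hat q\le2$, and, for $p<q$ and $\hat q>2$, the Kashin, Gluskin and Garnaev--Gluskin bounds together with their linear analogues (which carry logarithmic factors). In the range ``$p\ge q$ or $\hat q\le2$'' the optimal allocation keeps just the first $\asymp n$ parameters, i.e. $f$ is approximated by $S_{j_0}f$ with $2^{j_0d}\asymp n$, and the Jackson estimate at level $j_0$ yields exactly $n^{-\theta_{p,q,r,d}}$ as in the first line of \eqref{tpqrd_def}. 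In the range ``$p<q$, $\hat q>2$'' the budget must be spread non-uniformly over the levels (as in the arguments of Galeev and Dinh Dung), and which of the two competing quantities in the second line of \eqref{tpqrd_def} prevails is determined by whether the extremal level falls in the saturation regime $\vartheta_m(B_p^N,l_q^N)\asymp1$ or in a genuine power regime; the nondegeneracy hypothesis \eqref{rdp_kolm} guarantees that the optimum is attained at a well-defined level and that the sum over $j$ is controlled by its largest term, with no spurious logarithm.

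\emph{Lower bound.} Fix a level $j$, a function $\varphi\in C^\infty$ with $\supp\varphi\subset(0,1)^d$, and its affine copies $\varphi_Q$, one per level-$j$ cube. A scaling computation shows that $\sum_Qa_Q\varphi_Q\in c\,W^r_p$ precisely when $\|a\|_{l_p^{N_j}}\le c\,2^{-j(r-d/p)}$, while $\big\|\sum_Qa_Q\varphi_Q\big\|_{L_q}\asymp2^{-jd/q}\|a\|_{l_q^{N_j}}$; hence $W^r_p$ contains, up to a constant and to equivalence of norms, the set $N_j^{-\varrho}B_p^{N_j}\subset l_q^{N_j}$, so that $\vartheta_n(W^r_p,L_q)\gtrsim N_j^{-\varrho}\vartheta_n(B_p^{N_j},l_q^{N_j})$, and one optimizes over $j$ subject to $N_j\gtrsim n$. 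With the finite-dimensional lower bounds inserted, this single-block construction matches the upper bound throughout the range ``$p<q$, $\hat q>2$'' (and for $p\ge q$); in the remaining sub-ranges (essentially $p<q$ with $p$ close to $1$) one instead retains several levels at once and bounds below the width of the direct sum $\bigoplus_jN_j^{-\varrho}B_p^{N_j}$, and one also tests on staircase-type families (suitably mollified truncated power functions, whose $L_q$-geometry is governed by the width of the set of interval-indicator vectors) which belong to $W^r_p$ for small $p$ and realize the rate when the bump construction under-performs. Again \eqref{rdp_kolm} selects the scale, or finite family of scales, that saturates the estimate.

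\emph{Main obstacle.} The hard part is the case $p<q$, $\hat q>2$. There the upper bound requires the deep finite-dimensional width estimates (Kashin's bound for $d_m(B_1^N,l_2^N)$ and its generalizations) \emph{together} with the correct non-uniform distribution of degrees of freedom over the dyadic levels, plus a verification that the resulting series is dominated by its extremal term --- this is exactly where \eqref{rdp_kolm} is used. Symmetrically, the matching lower bound needs the dual finite-dimensional lower bounds and the right choice of scale (or combination of scales) realizing them. Establishing that the exponent is precisely $\theta_{p,q,r,d}$ as in the second line of \eqref{tpqrd_def}, rather than an endpoint value with a logarithmic correction, is the crux.
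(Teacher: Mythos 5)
The paper offers no proof of Theorem A: it is quoted as a known classical result with references to Birman--Solomyak, DeVore--Sharpley, Kashin and Tikhomirov, so there is no internal argument to compare yours against. Your outline is precisely the standard route taken in those sources --- dyadic Birman--Solomyak discretization reducing the problem to widths of $B_p^N$ in $l_q^N$, the Kashin/Gluskin/Garnaev--Gluskin finite-dimensional estimates together with their linear and Gelfand analogues, optimal distribution of degrees of freedom over the levels, and single- or multi-block test families for the lower bounds, with \eqref{rdp_kolm} ensuring a dominant extremal level --- and as a sketch it is accurate, the genuinely hard content residing, as you yourself note, in the finite-dimensional bounds and the allocation and lower-bound bookkeeping that you describe but do not carry out.
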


The problem concerning estimates of widths of weighted Sobolev
classes in weighted $L_q$-space was studied by Birman and Solomyak
\cite{birm}, El Kolli \cite{el_kolli}, Triebel \cite{triebel,
tr_jat}, Mynbaev and Otelbaev \cite{myn_otel}, Boykov \cite{boy_1,
boy_2}, Lizorkin and Otelbaev \cite{liz_otel1, otelbaev}, Aitenova
and Kusainova \cite{ait_kus1, ait_kus2}. For details, see, e.g.,
\cite{vas_sing}.

Let $|\cdot|$ be a norm on $\R^d$, and let $E, \, E'\subset \R^d$,
$x\in \R^d$. We set
$$
{\rm diam}_{|\cdot|}\, E=\sup \{|y-z|:\; y, \, z\in E\}, \;\; {\rm
dist}_{|\cdot|}\, (x, \, E)=\inf \{|x-y|:\; y\in E\}.
$$

\begin{Def}
\label{fca} Let $\Omega\subset\R^d$ be a bounded domain, and let
$a>0$. We say that $\Omega \in {\bf FC}(a)$ if there exists a
point $x_*\in \Omega$ such that, for any $x\in \Omega$, there
exist a number $T(x)>0$ and a curve $\gamma _x:[0, \, T(x)]
\rightarrow\Omega$ with the following properties:
\begin{enumerate}
\item $\gamma _x\in AC[0, \, T(x)]$, $\left|\frac{d\gamma _x(t)}{dt}\right|=1$ a.e.,
\item $\gamma _x(0)=x$, $\gamma _x(T(x))=x_*$,
\item $B_{at}(\gamma _x(t))\subset \Omega$ for any $t\in [0, \, T(x)]$.
\end{enumerate}
\end{Def}

\begin{Def}
We say that $\Omega$ satisfies the John condition (and call
$\Omega$ a John domain) if $\Omega\in {\bf FC}(a)$ for some $a>0$.
\end{Def}
For a bounded domain the John condition coincides with the
flexible cone condition (see definition in \cite{besov_il1}).
Reshetnyak \cite{resh1, resh2} found an integral representation
for functions on a John domain $\Omega$ in terms of their
derivatives of order $r$. This representation yields that for
$\frac rd-\left(\frac 1p-\frac 1q\right)_+\ge 0$ (for $\frac
rd-\left(\frac 1p-\frac 1q\right)_+> 0$, respectively) the class
$W^r_p(\Omega)$ can be continuously (respectively, compactly)
imbedded into $L_q(\Omega)$ (i.e., the conditions of continuous
and compact imbeddings are the same as for $\Omega=[0, \, 1]^d$).
Moreover, in \cite{vas_john, besov_peak_width} it was proved that
if $\Omega$ is a John domain and $p$, $q$, $r$, $d$ are such as in
Theorem \ref{sob_dn}, then widths have the same orders as for
$\Omega=[0, \, 1]^d$.

Throughout we suppose that $\overline{\Omega} \subset \left(-\frac
12, \, \frac 12\right)^d$ (here $\overline{\Omega}$ is the closure
of $\Omega$).

Denote by $\mathbb{H}$ the set of all non-decreasing functions
defined on $(0, \, 1]$.

\begin{Def}
\label{h_set} (see \cite{m_bricchi1}). Let $\Gamma\subset \R^d$ be
a nonempty compact set and $h\in \mathbb{H}$. We say that $\Gamma$
is an $h$-set if there are a constant $c_*\ge 1$ and a finite
countably additive measure $\mu$ on $\R^d$ such that $\supp
\mu=\Gamma$ and
\begin{align}
\label{c1htmu} c_*^{-1}h(t)\le \mu(B_t(x))\le c_* h(t)
\end{align}
for any $x\in \Gamma$ and $t\in (0, \, 1]$.
\end{Def}

Throughout we suppose that $1<p\le \infty$, $1\le q<\infty$, $r\in
\N$, $\delta:=r+\frac dq-\frac dp>0$. We denote $\log x:=\log_2
x$.

Let $\Gamma\subset \partial \Omega$ be an $h$-set,
\begin{align}
\label{gx_vrph} g(x)=\varphi_g({\rm dist}_{|\cdot|} (x, \,
\Gamma)),\quad v(x)=\varphi_v ({\rm dist}_{|\cdot|}(x, \,
\Gamma)),
\end{align}
where $\varphi_g$, $\varphi_v:(0, \, \infty)\rightarrow (0, \,
\infty)$. Suppose that in some neighborhood of zero
\begin{align}
\label{def_h1} h(t)=t^{\theta}|\log t|^\gamma\tau(|\log t|), \quad
0< \theta<d,
\end{align}
\begin{align}
\label{ghi_g01} \varphi_g(t)=t^{-\beta_g}|\log t|^{-\alpha_g}
\rho_g(|\log t|), \quad \varphi_v(t)=t^{-\beta_v} |\log
t|^{-\alpha_v} \rho_v(|\log t|),
\end{align}
where $\rho_g$, $\rho_v$, $\tau$ are absolutely continuous
functions,
\begin{align}
\label{ll1} \lim \limits _{y\to +\infty}\frac{y\tau'(y)}{\tau(y)}=
\lim \limits _{y\to +\infty}\frac{y\rho_g'(y)}{\rho_g(y)}=\lim
\limits _{y\to +\infty}\frac{y\rho_v'(y)}{\rho_v(y)}=0.
\end{align}

For $\beta_v<\frac{d-\theta}{q}$, in \cite{vas_vl_raspr,
vas_vl_raspr2, vas_width_raspr} there were obtained sufficient
conditions for embedding of $W^r_{p,g}(\Omega)$ into
$L_{q,v}(\Omega)$, and order estimates of Kolmogorov, Gelfand and
linear widths were found. Here we consider the limiting case
\begin{align}
\label{beta_v_eq_dtq} \beta_v=\frac{d-\theta}{q}, \quad
\alpha_v>\frac{1-\gamma}{q}.
\end{align}

We set $\beta=\beta_g+\beta_v$, $\alpha=\alpha_g+\alpha_v$,
$\rho(y)=\rho_g(y)\rho_v(y)$, $\mathfrak{Z}=(p, \, q, \, r, \, d,
\, a, \, c_*, \, h, \, g, \, v)$, $\mathfrak{Z}_*=(\mathfrak{Z},
\, {\rm diam}\, \Omega)$.

\begin{Trm}
\label{main} There exists $n_0=n_0(\mathfrak{Z})$ such that for
any $n\ge n_0$ the following assertion holds.
\begin{enumerate}
\item Let $\beta-\delta+\theta\left(\frac 1q-\frac
1p\right)_+<0$. We set $$\sigma_*(n)=(\log n)^{-\alpha+\frac
1q+\frac{(\beta-\delta)\gamma}{\theta}}\rho(\log
n)\tau^{\frac{\beta-\delta}{\theta}}(\log n).$$
\begin{itemize}
\item Let $p\ge q$ or $p< q$, $\hat q\le 2$.
We set
\begin{align}
\label{case1_theta_j_0} \theta_1=\frac{\delta}{d}-\left(\frac
1q-\frac 1p\right)_+, \quad  \theta_2=\frac{\delta
-\beta}{\theta}-\left(\frac 1q-\frac 1p\right)_+,
\end{align}
\begin{align}
\label{case1_sigma_j_0} \sigma_1(n)=1, \quad
\sigma_2(n)=\sigma_*(n).
\end{align}
Suppose that $\theta_1\ne \theta_2$, $j_*\in \{1, \, 2\}$,
$$
\theta_{j_*}=\min\{\theta_1, \, \theta_2\}.
$$
Then
$$
\vartheta_n(W^r_{p,g}(\Omega), \, L_{q,v}(\Omega))
\underset{\mathfrak{Z}_*}{\asymp}
n^{-\theta_{j_*}}\sigma_{j_*}(n).
$$
\item Let $p<q$, $\hat q>2$. We set
\begin{align}
\label{case1qg2_th12} \theta_1=\frac{\delta}{d} + \min\left\{\frac
1p-\frac 1q, \, \frac 12-\frac{1}{\hat q}\right\}, \quad
\theta_2=\frac{\hat q\delta}{2d},
\end{align}
\begin{align}
\label{case1qg2_th34} \theta_3=\frac{\delta-\beta}{\theta} +
\min\left\{\frac 1p-\frac 1q, \, \frac 12-\frac{1}{\hat
q}\right\}, \quad \theta_4 = \frac{\hat q(\delta
-\beta)}{2\theta},
\end{align}
\begin{align}
\sigma_1(n)=\sigma_2(n)=1, \quad \sigma_3(n)
=\sigma_4(n)=\sigma_*(n).
\end{align}
Suppose that there exists $j_*\in \{1, \, 2, \, 3, \, 4\}$ such
that
\begin{align}
\label{thj_min_case1} \theta_{j_*}<\min_{j\ne j_*} \theta_j.
\end{align}
Then
$$
\vartheta_n(W^r_{p,g}(\Omega), \, L_{q,v}(\Omega))
\underset{\mathfrak{Z}_*}{\asymp}
n^{-\theta_{j_*}}\sigma_{j_*}(n).
$$
\end{itemize}

\item Let $\beta-\delta+\theta\left(\frac 1q-\frac
1p\right)_+=0$. In addition, we suppose that
$\alpha_0:=\alpha-\frac 1q>0$ for $p<q$ and
$\alpha_0:=\alpha-1-(1-\gamma)\left(\frac 1q-\frac 1p\right)>0$
for $p\ge q$. Then
$$
\vartheta_n(W^r_{p,g}(\Omega), \, L_{q,v}(\Omega))
\underset{\mathfrak{Z}_*}{\asymp} (\log n)^{-\alpha_0} \rho(\log
n)\tau^{-\left(\frac 1q-\frac 1p\right)_+}(\log n).
$$
\end{enumerate}
\end{Trm}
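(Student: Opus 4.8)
The plan is to reduce the problem, by discretization, to the estimation of widths of a block-weighted direct sum of rescaled non-weighted Sobolev balls, along the lines of \cite{vas_vl_raspr, vas_vl_raspr2, vas_width_raspr, vas_john, besov_peak_width}, and then to invoke Theorem~\ref{sob_dn} together with the known two-sided estimates for $\vartheta_s(B_p^m,\ell_q^m)$. For $j\ge j_0$, with $j_0=j_0(\mathfrak{Z})$ large, put $\Omega_j=\{x\in\Omega:\ 2^{-j-1}\le\dist_{|\cdot|}(x,\Gamma)<2^{-j}\}$ and $\Omega_0=\Omega\setminus\bigcup_{j\ge j_0}\Omega_j$. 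On $\Omega_0$ the weights $g$, $v$ are bounded above and below by positive constants (by \eqref{ghi_g01}), and the part of $W^r_{p,g}(\Omega)$ living near $\Omega_0$ reduces, via a Reshetnyak-type integral representation (\cite{resh1, resh2}) and Theorem~\ref{sob_dn} together with \cite{vas_john, besov_peak_width}, to the non-weighted problem on a John domain, contributing the term $n^{-\theta_1}$ (and, in the range $p<q$, $\hat q>2$, the two candidate terms $n^{-\theta_1}$ and $n^{-\theta_2}$ with $\theta_1,\theta_2$ as in \eqref{case1qg2_th12}). Near $\Gamma$, the $h$-set property \eqref{c1htmu} together with \eqref{def_h1} shows that $\Omega_j$ can be split into $m_j\asymp 1/h(2^{-j})\asymp 2^{j\theta}|\log 2^{-j}|^{-\gamma}\tau(|\log 2^{-j}|)^{-1}$ pairwise almost disjoint subdomains of diameter $\asymp 2^{-j}$, on each of which the Sobolev embedding behaves as on a cube of side $2^{-j}$ and on each of which $g\asymp\varphi_g(2^{-j})$, $v\asymp\varphi_v(2^{-j})$, by \eqref{ll1}.

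Next I would pass to the local picture. On a piece of level $j$, rescaling to the unit cube and using $\|\nabla^r f/g\|_{L_p}\le 1$ together with Theorem~\ref{sob_dn}, the restriction of the class is, modulo polynomials, a multiple $\asymp b_j:=2^{-j\delta}\varphi_g(2^{-j})\varphi_v(2^{-j})=2^{-j(\delta-\beta)}|\log 2^{-j}|^{-\alpha}\rho(|\log 2^{-j}|)$ of the unit ball of $W^r_p$ of the unit cube, measured in $L_q$ of the unit cube, while the global constraint $\|\nabla^r f/g\|_{L_p(\Omega)}\le 1$ couples the pieces of one level and of different levels in an $\ell_p$-fashion, and the target norm couples them in an $\ell_q$-fashion. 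Hence $\vartheta_n(W^r_{p,g}(\Omega),L_{q,v}(\Omega))$ is, up to constants depending on $\mathfrak{Z}_*$, the $\vartheta_n$ of a direct sum over $j\ge j_0$ of $m_j$ copies of $b_j W^r_p(\text{unit cube})$, assembled with an $\ell_p$-type constraint and an $\ell_q$-type norm, plus the $\Omega_0$-term; in particular the effective modulus of level $j$ (the largest $L_{q,v}$-norm of an element of the class supported in $\Omega_j$) is $\asymp b_j m_j^{(1/q-1/p)_+}\asymp 2^{-j\theta\,(\,(\delta-\beta)/\theta-(1/q-1/p)_+\,)}\cdot(\text{slowly varying})$. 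This is where \eqref{beta_v_eq_dtq} enters: the weighted volume $\sum_{j\ge j_0}m_j\varphi_v(2^{-j})^q 2^{-jd}\asymp\sum_{j\ge j_0}|\log 2^{-j}|^{-q\alpha_v-\gamma}\rho_v(|\log 2^{-j}|)^q\tau(|\log 2^{-j}|)^{-1}$ converges precisely because $\alpha_v>\frac{1-\gamma}{q}$, which makes the embedding into $L_{q,v}(\Omega)$ compact; and the exponent of $2^{-j}$ in the effective modulus vanishes exactly when $\beta-\delta+\theta(\frac1q-\frac1p)_+=0$, i.e. precisely in part~2, forcing purely logarithmic decay there.

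The heart of the proof is the estimate of the resulting discrete quantity; the three widths are handled uniformly through $\hat q$, since Theorem~\ref{sob_dn} and the bounds for $\vartheta_s(B_p^m,\ell_q^m)$ are uniform in this sense. One distributes the $n$ functionals (respectively the rank, the subspace dimension) among the levels and optimizes the allocation; since $m_j$ grows geometrically, $\sum_{j'\le j}m_{j'}\asymp m_j$, which singles out a critical scale of order $\log n$. In part~1 the effective modulus still carries a genuine power of $2^{-j}$, so exactly one candidate exponent dominates: $\theta_1$ is the $\Omega_0$-contribution, and the remaining $\theta_j$ (together with $\sigma_*$, whose exponent contains the summand $\tfrac1q$ reflecting the $\ell_q$-assembly in the block structure) come from the critical scale, where the branch $\frac{\hat q\delta}{2d}$ of \eqref{tpqrd_def} produces the exponents with the factor $\frac{\hat q}{2}$ in \eqref{case1qg2_th12}--\eqref{case1qg2_th34}; the separation hypotheses $\theta_1\ne\theta_2$ (respectively \eqref{thj_min_case1}) guarantee a unique dominant term, so that no logarithmic factor is lost in matching the upper and lower bounds. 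In part~2 the geometric decay of the effective modulus has degenerated, all scales $j\lesssim\log n$ are comparable in the power of $n$, and the optimization reduces to summing over $\asymp\log n$ scales a logarithmically divergent family of slowly varying terms; carrying this out — using \eqref{ll1} to control $\rho_g$, $\rho_v$, $\tau$ — yields $(\log n)^{-\alpha_0}\rho(\log n)\tau^{-(\frac1q-\frac1p)_+}(\log n)$, the hypothesis $\alpha_0>0$ being exactly the condition under which the extremal configuration is the full range of scales and the neglected tail is of lower order. The lower bounds in both parts follow by testing the class on suitable sub-families of the pieces — a balanced family across the scales $j\lesssim\log n$ in part~2, essentially a single scale in part~1 — reducing once more to lower estimates for $\vartheta_s(B_p^m,\ell_q^m)$ and for finite direct sums of such balls.

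The main obstacle is precisely this discrete direct-sum estimate in the critical regime of part~2, and, in the range $p<q$, $\hat q>2$, the bookkeeping of which of $\theta_1,\dots,\theta_4$ dominates: one must show that spreading the budget over $\asymp\log n$ geometrically sized blocks with logarithmically tilted coefficients $b_j$ is simultaneously optimal (for the upper bound) and unavoidable (for the lower bound), so that the logarithmic exponents in the answer are sharp. This rests on sufficiently precise two-sided estimates for $\vartheta_s(B_p^m,\ell_q^m)$ and, crucially, on their stability under the slowly varying perturbations $\rho_g$, $\rho_v$, $\tau$ permitted by \eqref{ll1}, which has to be tracked through every step; the restriction to $n\ge n_0(\mathfrak{Z})$ is what allows these slowly varying factors to be treated as essentially constant on each dyadic range of scales.
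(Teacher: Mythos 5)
Your overall architecture --- dyadic layers around the $h$-set, reduction to discretized problems, a critical scale of order $\log n$, separation hypotheses singling out one dominant exponent, and Theorem \ref{sob_dn} for the interior contribution --- is indeed the general scheme behind the paper. But the central step of your plan, the claimed two-sided equivalence of $\vartheta_n(W^r_{p,g}(\Omega),L_{q,v}(\Omega))$ with the widths of a direct sum of single-scale blocks $b_j\,W^r_p(\text{unit cube})$ coupled in an $\ell_p$/$\ell_q$ fashion with $v\asymp\varphi_v(2^{-j})$ frozen on each piece, fails precisely in the limiting case (\ref{beta_v_eq_dtq}), and the error sits exactly in the logarithmic exponents that are the novelty of the theorem (cf.\ Remark \ref{rem_nonweight}): the powers of $n$ would come out right, the powers of $\log n$ would not. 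Because $\beta_v=\frac{d-\theta}{q}$, the $v$-mass accumulates critically across all dyadic shells inside a neighborhood of a point of $\Gamma$: a single bump of scale $2^{-m}$ touching $\Gamma$ has $L_{q,v}$-norm larger by a factor $m^{1/q}$ than your frozen-weight model predicts (this is Lemma \ref{c_del}, where the sum over shells $l$ contributes $m^{1/q}$), and in part 2 with $p\ge q$ the sharp lower bound needs the nested multiscale towers $f_{\Delta}=\sum_s\sum_i\psi_{\Delta_{s,i}}$ of Proposition \ref{low_est_sta2}, which gain a factor $t^{1-\frac1p}$ over $t$ stacked scales (values add like $t$, gradient cost like $t^{1/p}$). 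Disjointly supported single-scale pieces, assembled with an $\ell_p$ budget and an $\ell_q$ target as in your model, gain at most $t^{\left(\frac1q-\frac1p\right)_+}$ across $t$ scales, which is strictly smaller for $q>1$. Concretely, your lower-bound plan misses the summand $\frac1q$ in the exponent of $\sigma_*$ whenever a boundary exponent dominates in part 1, yields $(\log n)^{-\alpha}$ instead of $(\log n)^{-\alpha+\frac1q}$ in part 2 for $p<q$, and yields an exponent off by $1-\frac1q$ from $-\alpha_0$ in part 2 for $p\ge q$.

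The same defect undermines your upper bound: in the limiting case the approximation error is not a sum of independent block errors, because the polynomial corrections propagate across scales, and controlling that propagation is exactly where the paper needs the two-weight Hardy-type (summation-operator) inequalities on trees, i.e.\ Theorems \ref{emb_gen_plq_cor} and \ref{emb_gen_pgq} from \cite{vas_sib_m_j}, \cite{vas_hardy_tree}; their application under (\ref{beta_v_eq_dtq}) (Corollaries \ref{cor_plq} and \ref{cor_pgeq}) is what produces the factors $(\overline{s}k)^{-\alpha+\frac1q}$ and, at criticality with $p\ge q$, $(\overline{s}k)^{-\alpha+1+\frac1q-\frac1p}$. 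An upper bound read off from your block model would actually be smaller than the true lower bound, so it cannot be justified; likewise, boundedness of the embedding itself under $\alpha_v>\frac{1-\gamma}{q}$ (and $\alpha_0>0$ in part 2) does not follow from the weighted-volume convergence heuristic but from these tree inequalities. So the missing ideas are concrete: (i) for the upper estimate, the Hardy-on-trees machinery replacing the naive direct-sum reduction; (ii) for the lower estimate, the shell-accumulation in Lemma \ref{c_del} and the multiscale tower test functions of Proposition \ref{low_est_sta2}, rather than families of single-scale pieces.
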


\begin{Rem}
\label{non_w} From Theorem \ref{sob_dn} it follows that for
$\frac{\delta-\beta}{\theta}>\frac{\delta}{d}$ the order estimates
are the same as in the non-weighted case.
\end{Rem}

\begin{Rem} \label{rem_nonweight}
Formulas in Theorem \ref{main} differ from formulas in
\cite{vas_width_raspr} by the power of the logarithmic factor.
\end{Rem}

The upper estimates follow from the general result about the
estimate of widths of function classes on sets with tree-like
structure. Problems on estimating widths and entropy numbers for
embedding operators of weighted function classes on trees were
studied in papers of Evans, Harris, Lang, Solomyak, Lifshits and
Linde \cite{ev_har_lang, solomyak, lifs_m, l_l, l_l1}.

Without loss of generality, as $|\cdot|$ we may take $|(x_1, \,
\dots, \, x_d)|=\max _{1\le i\le d}|x_i|$.

\section{Proof of the upper estimate}

In this section, we obtain upper estimates for widths in Theorem
\ref{main}.

The following lemma was proved in \cite{vas_bes} (see inequalities
(60)).
\begin{Lem}
\label{sum_lem1} Let $\Lambda_*:(0, \, \infty) \rightarrow (0, \,
\infty)$ be an absolutely continuous function such that $\lim
\limits_{y\to +\infty}\frac{y\Lambda _*'(y)} {\Lambda _*(y)}=0$.
Then for any $\varepsilon >0$
\begin{align}
\label{rho_yy1} t^{-\varepsilon}
\underset{\varepsilon,\Lambda_*}{\lesssim}
\frac{\Lambda_*(ty)}{\Lambda_*(y)}\underset{\varepsilon,
\Lambda_*}{\lesssim} t^\varepsilon,\quad 1\le y<\infty, \;\; 1\le
t<\infty.
\end{align}
\end{Lem}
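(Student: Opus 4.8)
This is a standard estimate for functions satisfying a Zygmund-type condition, asserting that $\Lambda_*$ grows and decays more slowly than any power of its argument. The plan is to pass to $F(y):=\log \Lambda_*(y)$, which is well defined since $\Lambda_*>0$ and is absolutely continuous with $F'(y)=\Lambda_*'(y)/\Lambda_*(y)$ a.e., so that the hypothesis reads $yF'(y)\to 0$ as $y\to+\infty$. After exponentiating, the two claimed inequalities are equivalent to the single two-sided estimate
$$
|F(ty)-F(y)|\le \varepsilon\log t+C(\varepsilon,\Lambda_*),\qquad 1\le y<\infty,\;\;1\le t<\infty,
$$
with the multiplicative constant $c(\varepsilon,\Lambda_*)$ of the statement then taken to be $e^{C(\varepsilon,\Lambda_*)}$.

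Fix $\varepsilon>0$. Using the hypothesis, I would first choose $y_0=y_0(\varepsilon,\Lambda_*)\ge 1$ with $|F'(y)|\le \varepsilon/y$ for all $y\ge y_0$. Then for $y\ge y_0$ (whence also $ty\ge y_0$) the fundamental theorem of calculus for absolutely continuous functions gives
$$
|F(ty)-F(y)|=\left|\int_y^{ty}F'(s)\,ds\right|\le \int_y^{ty}\frac{\varepsilon}{s}\,ds=\varepsilon\log t,
$$
that is, the required bound with no additive term.

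It remains to handle $1\le y<y_0$. On the compact interval $[1,y_0]$ the function $\Lambda_*$ is continuous and strictly positive, so $M:=\sup_{[1,y_0]}|F|<\infty$. If $ty\le y_0$, then $y,ty\in[1,y_0]$ and $|F(ty)-F(y)|\le 2M\le \varepsilon\log t+2M$, since $t\ge1$. If $ty>y_0$, I would split $F(ty)-F(y)=\bigl(F(ty)-F(y_0)\bigr)+\bigl(F(y_0)-F(y)\bigr)$: the second difference is at most $2M$ because both arguments lie in $[1,y_0]$, while for the first, writing $ty=(ty/y_0)y_0$ with $1<ty/y_0<t$, the case already proved yields $|F(ty)-F(y_0)|\le \varepsilon\log(ty/y_0)\le\varepsilon\log t$. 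In either subcase $|F(ty)-F(y)|\le \varepsilon\log t+2M$, so $C(\varepsilon,\Lambda_*):=2M$ works, and exponentiation finishes the proof.

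I do not foresee any genuine difficulty; the one point needing slight care is the range $1\le y<y_0$, where the smallness of the logarithmic derivative is unavailable and one must instead absorb the bounded oscillation of $\Lambda_*$ over the compact set $[1,y_0]$ into the constant — which is precisely why the conclusion is stated with $\underset{\varepsilon,\Lambda_*}{\lesssim}$ rather than as a sharp inequality.
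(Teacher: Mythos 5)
Your proof is correct. Note that the paper itself gives no argument for this lemma --- it simply cites inequalities (60) of \cite{vas_bes} --- and your route (passing to $F=\log\Lambda_*$, integrating the logarithmic derivative via the fundamental theorem of calculus for absolutely continuous functions on $[y,ty]$ once $y\ge y_0(\varepsilon)$, and absorbing the bounded oscillation of $F$ on the compact set $[1,y_0]$ into the constant) is exactly the standard argument behind such estimates for slowly varying functions. The only cosmetic point is that $|F'(y)|\le\varepsilon/y$ should be asserted for almost every $y\ge y_0$, since $\Lambda_*'$ exists only a.e.; this does not affect the integral estimate.
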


Let $c_*\ge 1$ be the constant from the definition of an $h$-set.
From (\ref{def_h1}), (\ref{ghi_g01}), (\ref{ll1}) and Lemma
\ref{sum_lem1} it follows that there exists $c_0=
c_0(\mathfrak{Z})\ge c_*$ such that
\begin{align}
\label{h_cond_1}   \frac{h(t)}{h(s)}\le c_0, \quad
\frac{\varphi_g(t)}{\varphi_g(s)}\le c_0, \quad
\frac{\varphi_v(t)}{\varphi_v(s)}\le c_0, \quad j\in {\mathbb{N}},
\;\; t, \; s\in [2^{-j-1}, \, 2^{-j+1}].
\end{align}

Let $(\Omega, \, \Sigma, \, \nu)$ be a measure space. We say that
sets $A$, $B\subset \Omega$ do not overlap if $\nu(A\cap B)=0$.
Let $m\in {\mathbb{N}}\cup \{\infty\}$, $E$, $E_1, \, \dots, \,
E_m\subset \Omega$ be measurable sets. We say that
$\{E_i\}_{i=1}^m$ is a partition of $E$ if the sets $E_i$ do not
overlap pairwise and $\nu\left[\left(\cup _{i=1}^m
E_i\right)\bigtriangleup E\right]=0$.

Let $({\cal T}, \, \xi_0)$ be a tree rooted at $\xi_0$. We
introduce a partial order on ${\bf V}({\cal T})$ as follows: we
say that $\xi'>\xi$ if there exists a simple path $(\xi_0, \,
\xi_1, \, \dots , \, \xi_n, \, \xi')$ such that $\xi=\xi_k$ for
some $k\in \overline{0, \, n}$. In this case, we set $\rho_{{\cal
T}}(\xi, \, \xi')=\rho_{{\cal T}}(\xi', \, \xi) =n+1-k$. In
addition, we denote $\rho_{{\cal T}}(\xi, \, \xi)=0$. If
$\xi'>\xi$ or $\xi'=\xi$, then we write $\xi'\ge \xi$ and denote
$[\xi, \, \xi']:= \{\xi''\in {\bf V}({\cal T}):\xi \le \xi''\le
\xi'\}$. This partial order on ${\cal T}$ induces a partial order
on its subtree.

Given $j\in {\mathbb{Z}}_+$, $\xi\in {\bf V}({\cal T})$, we set
$$
\label{v1v}{\bf V}_j(\xi):={\bf V}_j ^{{\cal T}}(\xi):=
\{\xi'\ge\xi:\; \rho_{{\cal T}}(\xi, \, \xi')=j\}.
$$
For each vertex $\xi\in {\bf V}({\cal T})$ we denote by ${\cal
T}_\xi=({\cal T}_\xi, \, \xi)$ a subtree in ${\cal T}$ with vertex
set $\{\xi'\in {\bf V}({\cal T}):\xi'\ge \xi\}$.

In \cite{vas_vl_raspr, vas_vl_raspr2} a tree $({\cal A}, \,
\eta_{j_*,1})$ with vertex set $\{\eta_{j,i}\}_{j\ge j_*, \, i\in
\tilde I_j}$ was constructed, as well as the partition of $\Omega$
into subdomains $\Omega[\xi]$, $\xi\in {\bf V}({\cal A})$.
Moreover, ${\bf V}_{j-j_*}^{{\cal
A}}(\eta_{j_*,1})=\{\eta_{j,i}\}_{i\in \tilde I_j}$ and there
exists a number $\overline{s}=\overline{s}(a, \, d)\in \N$ such
that
$$
{\rm diam}\, \Omega[\eta_{j,i}] \underset{a,d,c_0}{\asymp}
2^{-\overline{s}j}, \quad {\rm dist}_{|\cdot|}\, (x, \, \Gamma)
\underset{a,d,c_0}{\asymp} 2^{-\overline{s}j}, \quad x\in
\Omega[\eta_{j,i}],
$$
$$
{\rm card}\, {\bf V}_{j'-j}^{{\cal A}}(\eta_{j,i})
\underset{a,d,c_0}{\lesssim}
\frac{h(2^{-\overline{s}j})}{h(2^{-\overline{s}j'})}, \quad j'\ge
j\ge j_*.
$$
In particular,
\begin{align}
\label{v1le1} {\rm card}\, {\bf V}_1^{{\cal A}}(\eta_{j,i})
\stackrel{(\ref{h_cond_1})}{\underset{a,d,c_0}{\lesssim}} 1, \quad
j\ge j_*.
\end{align}

We set
\begin{align}
\label{u_w_xi} u(\eta_{j,i})
=u_j=\varphi_g(2^{-\overline{s}j})\cdot 2^{-\left(r-\frac
dp\right)\overline{s}j}, \quad w(\eta_{j,i})
=w_j=\varphi_v(2^{-\overline{s}j})\cdot
2^{-\frac{d\overline{s}j}{q}}.
\end{align}

Given a subtree ${\cal D}\subset {\cal A}$, we denote
$\Omega[{\cal D}] =\cup _{\xi\in {\bf V}({\cal D})} \Omega[\xi]$.

In \cite{vas_sib_m_j} sufficient conditions for embedding
$W^r_{p,g}(\Omega)$ into $L_{q,v}(\Omega)$ were obtained; here
(\ref{h_cond_1}) holds and the functions $g$, $v$ satisfy
(\ref{gx_vrph}). Let us formulate these results.

\begin{trma}
\label{emb_gen_plq_cor} Let $u$, $w$ be defined by (\ref{u_w_xi}),
$1<p<q<\infty$. Suppose that there exist $l_0\in {\mathbb{N}}$ and
$\lambda\in (0, \, 1)$ such that
\begin{align}
\label{lsijl_hh} \frac{\left(\sum \limits _{i=j+l_0}^\infty
\frac{h(2^{-\overline{s}(j+l_0)})}{ h(2^{-\overline{s}i})}w_i^q
\right)^{1/q} } {w_j} \le \lambda, \quad j\ge j_*.
\end{align}
Let $\sup _{j\ge j_*} u_j\left(\sum \limits _{i=j}^\infty
\frac{h(2^{-\overline{s}j})}{
h(2^{-\overline{s}i})}w_i^q\right)^{1/q}<\infty$. Then
$W^r_{p,g}(\Omega) \subset L_{q,v}(\Omega)$ and for any $k\ge
j_*$, $\xi_*\in {\bf V}^{\cal A}_{k-j_*}(\eta_{j_*,1})$ there
exists a linear continuous operator $P:L_{q,v}(\Omega) \rightarrow
{\cal P}_{r-1} (\Omega)$ such that for any subtree ${\cal
D}\subset {\cal A}$ rooted at $\xi_*$ and for any function $f\in
W^r_{p,g}(\Omega)$
$$
\|f-Pf\|_{L_{q,v}(\Omega[{\cal D}])} \underset{\mathfrak{Z}}
{\lesssim} \sup _{j\ge k} u_j \left(\sum \limits _{i\ge j}
\frac{h(2^{-\overline{s}j})}{ h(2^{-\overline{s}i})}w_i^q
\right)^{\frac 1q}\left\|\frac{\nabla^r
f}{g}\right\|_{L_p(\Omega[{\cal D}])}.
$$
\end{trma}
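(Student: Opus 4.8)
The plan is to construct $P$ from local polynomial (Sobolev--Poincar\'e) approximations on the cells $\Omega[\xi]$ of the partition of \cite{vas_vl_raspr, vas_vl_raspr2}, to glue these by a telescoping argument along the paths of $\mathcal{A}$, and then to reduce the global estimate to a discrete Hardy-type inequality on the tree $\mathcal{A}$ — for which condition (\ref{lsijl_hh}) and the finiteness of $N:=\sup_{j\ge k}u_j\left(\sum_{i\ge j}\frac{h(2^{-\overline{s}j})}{h(2^{-\overline{s}i})}w_i^q\right)^{1/q}$ are precisely what is needed. Below, for $\zeta\in{\bf V}({\cal A})$ I write $j(\zeta)$ for its level (so $\zeta\in{\bf V}_{j(\zeta)-j_*}^{{\cal A}}(\eta_{j_*,1})$, $k=j(\xi_*)$), and I set $\widehat\Omega[\xi]=\Omega[\xi]\cup\Omega[\xi']$ with $\xi'$ the parent of $\xi$ (and $\widehat\Omega[\eta_{j_*,1}]=\Omega[\eta_{j_*,1}]$).

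\emph{Step 1: local estimates.} By the construction of ${\cal A}$, $\widehat\Omega[\xi]$ is a John domain with constants depending only on $\mathfrak{Z}$, ${\rm diam}\,\widehat\Omega[\xi]\underset{\mathfrak{Z}}{\asymp}2^{-\overline{s}j(\xi)}$, and by (\ref{gx_vrph}), (\ref{h_cond_1}), $g\underset{\mathfrak{Z}}{\asymp}\varphi_g(2^{-\overline{s}j(\xi)})$ and $v\underset{\mathfrak{Z}}{\asymp}\varphi_v(2^{-\overline{s}j(\xi)})$ on $\widehat\Omega[\xi]$. Since $\delta=r+\frac dq-\frac dp>0$, Reshetnyak's integral representation \cite{resh1, resh2} implies that for a linear projection $T_\xi$ of $L_1(\widehat\Omega[\xi])$ onto ${\cal P}_{r-1}$ (e.g. the one defined by the moment conditions $\int_{\widehat\Omega[\xi]}(f-T_\xi f)x^{\overline{\beta}}\,dx=0$, $|\overline{\beta}|\le r-1$),
$$
\|f-T_\xi f\|_{L_q(\widehat\Omega[\xi])}\underset{\mathfrak{Z}}{\lesssim}2^{-\overline{s}\,j(\xi)\,(r+\frac dq-\frac dp)}\|\nabla^r f\|_{L_p(\widehat\Omega[\xi])};
$$
writing $\nabla^r f=g\cdot\frac{\nabla^r f}{g}$ and using $\varphi_g(2^{-\overline{s}j})2^{-\overline{s}j(r+\frac dq-\frac dp)}=u_j 2^{-\overline{s}jd/q}$ (cf. (\ref{u_w_xi})) we obtain
$$
\|f-T_\xi f\|_{L_q(\widehat\Omega[\xi])}\underset{\mathfrak{Z}}{\lesssim}u_{j(\xi)}\,2^{-\overline{s}\,j(\xi)\,d/q}\,B_\xi,\qquad B_\xi:=\left\|\frac{\nabla^r f}{g}\right\|_{L_p(\widehat\Omega[\xi])}.
$$

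\emph{Step 2: construction of $P$ and telescoping.} Put $P:=T_{\xi_*}$; it is linear and continuous from $L_{q,v}(\Omega)$ to ${\cal P}_{r-1}(\Omega)$ because it factors through $L_1(\Omega[\xi_*])$ and $L_{q,v}(\Omega)\hookrightarrow L_q(\Omega[\xi_*])\hookrightarrow L_1(\Omega[\xi_*])$ (on the fixed cell $\Omega[\xi_*]$ the weight $v$ is bounded above and below by positive constants). Fix $\xi\in{\bf V}({\cal D})$ and let $\xi_*=\zeta_0<\zeta_1<\dots<\zeta_{j(\xi)-k}=\xi$ be the path in ${\cal D}$; each $\zeta_{m-1}$ is the parent of $\zeta_m$ and lies in ${\bf V}({\cal D})$. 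On $\Omega[\xi]$ we telescope $f-Pf=(f-T_\xi f)+\sum_{m=1}^{j(\xi)-k}(T_{\zeta_m}f-T_{\zeta_{m-1}}f)$. Since both $T_{\zeta_m}f$ and $T_{\zeta_{m-1}}f$ approximate $f$ on $\Omega[\zeta_{m-1}]\subset\widehat\Omega[\zeta_{m-1}]\cap\widehat\Omega[\zeta_m]$, Step 1 and $u_{j\pm1}\underset{\mathfrak{Z}}{\asymp}u_j$ (by (\ref{h_cond_1})) give $\|T_{\zeta_m}f-T_{\zeta_{m-1}}f\|_{L_q(\Omega[\zeta_{m-1}])}\underset{\mathfrak{Z}}{\lesssim}u_{j(\zeta_{m-1})}2^{-\overline{s}\,j(\zeta_{m-1})\,d/q}(B_{\zeta_{m-1}}+B_{\zeta_m})$, and since $T_{\zeta_m}f-T_{\zeta_{m-1}}f\in{\cal P}_{r-1}$, an inverse (Remez/Markov-type) inequality for polynomials of degree $\le r-1$ transfers this bound to the deeper, smaller cell $\Omega[\xi]$ at the cost of the factor $(2^{-\overline{s}j(\xi)}/2^{-\overline{s}j(\zeta_{m-1})})^{d/q}$ (one uses that $\Omega[\zeta_{m-1}]$ contains an inscribed ball of radius $\underset{\mathfrak{Z}}{\asymp}2^{-\overline{s}j(\zeta_{m-1})}$ and that, by the construction, $\Omega[\xi]$ lies within distance $\underset{\mathfrak{Z}}{\lesssim}2^{-\overline{s}j(\zeta_{m-1})}$ of it). The powers of $2$ telescope; adding $\|f-T_\xi f\|_{L_q(\Omega[\xi])}\underset{\mathfrak{Z}}{\lesssim}u_{j(\xi)}2^{-\overline{s}j(\xi)d/q}B_\xi$ and multiplying by $\varphi_v(2^{-\overline{s}j(\xi)})\underset{\mathfrak{Z}}{\asymp}v|_{\Omega[\xi]}$ (with $\varphi_v(2^{-\overline{s}j})2^{-\overline{s}jd/q}=w_j$) we arrive at
$$
\|f-Pf\|_{L_{q,v}(\Omega[\xi])}\underset{\mathfrak{Z}}{\lesssim}w_{j(\xi)}\sum_{\zeta\in[\xi_*, \, \xi]}u_{j(\zeta)}B_\zeta.
$$

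\emph{Step 3: reduction to a discrete Hardy inequality on ${\cal A}$.} Since $\{\Omega[\xi]\}_{\xi\in{\bf V}({\cal D})}$ is a partition of $\Omega[{\cal D}]$ and $q\ge1$, summing the $q$-th powers gives $\|f-Pf\|_{L_{q,v}(\Omega[{\cal D}])}^q\underset{\mathfrak{Z}}{\lesssim}\sum_{\xi\in{\bf V}({\cal D})}w_{j(\xi)}^q\left(\sum_{\zeta\in[\xi_*, \, \xi]}u_{j(\zeta)}B_\zeta\right)^q$; moreover $\widehat\Omega[\zeta]\subset\Omega[{\cal D}]$ for $\zeta\in{\bf V}({\cal D})$ and, by (\ref{v1le1}), the sets $\widehat\Omega[\zeta]$ have bounded overlap, so $\sum_{\zeta\in{\bf V}({\cal D})}B_\zeta^p\underset{\mathfrak{Z}}{\lesssim}\left\|\frac{\nabla^r f}{g}\right\|_{L_p(\Omega[{\cal D}])}^p$. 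It therefore remains to establish, for arbitrary $B_\zeta\ge0$, the discrete Hardy-type inequality
$$
\sum_{\xi\in{\bf V}({\cal D})}w_{j(\xi)}^q\left(\sum_{\zeta\in[\xi_*, \, \xi]}u_{j(\zeta)}B_\zeta\right)^q\underset{\mathfrak{Z}}{\lesssim}N^q\left(\sum_{\zeta\in{\bf V}({\cal D})}B_\zeta^p\right)^{q/p}.
$$
The ingredients are: (a) the branching estimate ${\rm card}\,{\bf V}_{i-j(\zeta)}^{{\cal A}}(\zeta)\underset{\mathfrak{Z}}{\lesssim}h(2^{-\overline{s}j(\zeta)})/h(2^{-\overline{s}i})$, which yields $u_{j(\zeta)}\left(\sum_{\eta\ge\zeta}w_{j(\eta)}^q\right)^{1/q}\underset{\mathfrak{Z}}{\lesssim}N$; and (b) condition (\ref{lsijl_hh}), which already forces the geometric decay $w_{j+l_0}\le\lambda w_j$ (the term $i=j+l_0$ alone bounds the left side from below) and, more importantly, combined with (a), controls the weighted subtree masses $\left(\sum_{\eta\ge\zeta}w_{j(\eta)}^q\right)^{1/q}$ by the local weights $w_{j(\zeta)}$ — this is exactly the (Muckenhoupt-type) testing condition needed for a Hardy inequality on a tree when $p<q$. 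Granting (a), (b), the displayed inequality follows from a careful downward induction on ${\cal A}$ (or, equivalently, by splitting the inner path-sum according to the geometrically decaying weights so that the logarithmic path-length factor produced by a crude H\"older estimate is absorbed), in the spirit of the Hardy inequalities on trees in \cite{ev_har_lang, lifs_m, l_l}. Finally, taking ${\cal D}={\cal A}$ yields $W^r_{p,g}(\Omega)\subset L_{q,v}(\Omega)$. I expect Step 3 — the discrete Hardy inequality for $p<q$, and in particular getting the interplay between the decay rate in (\ref{lsijl_hh}) and the branching factor $h(2^{-\overline{s}j(\zeta)})/h(2^{-\overline{s}i})$ to work out in (b) — to be the main obstacle; the local Sobolev--Poincar\'e bound, the polynomial comparison, and the bookkeeping of the powers of $2$ along the paths are routine.
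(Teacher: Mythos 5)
Note first that the paper you are working from does not prove this statement at all: Theorem \ref{emb_gen_plq_cor} is quoted from \cite{vas_sib_m_j}, and the accompanying remark states that its proof rests on the two-weight summation-operator (Hardy-type) estimates on trees obtained in the separate paper \cite{vas_hardy_tree}. Your outline reproduces exactly that architecture — local Sobolev--Poincar\'e approximation on the cells $\Omega[\xi]$, telescoping of the local projections along the paths of ${\cal A}$, and reduction to a discrete two-weight inequality on the tree — so Steps 1 and 2 are in line with the intended proof and are essentially routine bookkeeping (the scaling identities $\varphi_g(2^{-\overline{s}j})2^{-\overline{s}j(r+\frac dq-\frac dp)}=u_j2^{-\overline{s}jd/q}$ and $\varphi_v(2^{-\overline{s}j})2^{-\overline{s}jd/q}=w_j$, the polynomial comparison between nested cells, the bounded overlap of the $\widehat\Omega[\xi]$ via (\ref{v1le1})).

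The genuine gap is Step 3, and you have in effect conceded it yourself. The inequality
$$
\sum_{\xi\in{\bf V}({\cal D})}w_{j(\xi)}^q\Bigl(\sum_{\zeta\in[\xi_*,\,\xi]}u_{j(\zeta)}B_\zeta\Bigr)^q\underset{\mathfrak{Z}}{\lesssim}N^q\Bigl(\sum_{\zeta\in{\bf V}({\cal D})}B_\zeta^p\Bigr)^{q/p}
$$
for $p<q$ on a branching tree is not a consequence of the testing condition (a) alone: unlike the one-dimensional (chain) case, a Muckenhoupt-type condition does not in general imply the two-weight Hardy inequality on a tree, and this is precisely why the hypothesis (\ref{lsijl_hh}) appears in the statement. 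Your item (b) only extracts from (\ref{lsijl_hh}) the pointwise decay $w_{j+l_0}\le\lambda w_j$, whereas what must actually be exploited is the $\lambda$-decay of the $h$-weighted \emph{tail sums} in combination with the branching bound ${\rm card}\,{\bf V}_{i-j}^{\cal A}(\zeta)\lesssim h(2^{-\overline{s}j})/h(2^{-\overline{s}i})$; the phrase ``careful downward induction'' or ``splitting the inner path-sum so that the logarithmic factor is absorbed'' is a heuristic, not an argument, and a crude H\"older estimate along paths does indeed lose a factor of the path length and, worse, does not control the summation over the exponentially many branches below a vertex. This interplay is the entire analytic content of the theorem (it occupies the separate paper \cite{vas_hardy_tree}), so as written the proposal establishes only the reduction to it, not the theorem itself. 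To close the gap you would either have to invoke \cite{vas_hardy_tree} explicitly or carry out the layer decomposition of ${\cal A}$ by the levels at which the quantity $\bigl(\sum_{\eta\ge\zeta}w_{j(\eta)}^q\bigr)^{1/q}$ drops by factors of $\lambda$, estimating on each layer with H\"older in $p<q$ and resumming the resulting geometric series — none of which is present in your sketch.
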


\begin{trma}
\label{emb_gen_pgq} Let $p\ge q$, $\xi_*\in {\bf V}^{\cal
A}_{k-j_*}(\eta_{j_*,1})$, and let the functions $u$, $w$ on ${\bf
V}({\cal A})$ be defined by (\ref{u_w_xi}). We set $\hat
w_j=w_j\cdot \left(\frac{h(2^{-\overline{s}k})}
{h(2^{-\overline{s}j})}\right)^{\frac 1q}$, $\hat u_j=u_j\cdot
\left(\frac{h(2^{-\overline{s}j})} {h(2^{-\overline{s}k})} \right)
^{\frac 1p}$, $k\le j<\infty$. Let
\begin{align}
\label{m_uw_peq} M_{\hat u,\hat w}(k):=\sup _{k\le
j<\infty}\Bigl(\sum \limits _{i=j}^\infty \hat w_i^q\Bigr)^{\frac
1q}\Bigl( \sum \limits _{i=k}^j \hat
u_i^{p'}\Bigr)^{\frac{1}{p'}}< \infty,\quad 1<p= q<\infty,
\end{align}
\begin{align}
\label{m_uw_pgq} M_{\hat u,\hat w}(k):=\left(\sum \limits
_{j=k}^\infty \left(\Bigl(\sum \limits _{i=j}^\infty \hat
w_i^q\Bigr)^{\frac 1p}\Bigl(\sum \limits _{i=k}^j \hat
u_i^{p'}\Bigr)^{\frac{1}{p'}}\right) ^{\frac{pq}{p-q}}\hat
w_j^q\right)^{\frac 1q-\frac 1p}<\infty,\quad q<p.
\end{align}
Then $W^r_{p,g}(\Omega[{\cal A}_{\xi_*}]) \subset
L_{q,v}(\Omega[{\cal A}_{\xi_*}])$ and there exists a linear
continuous operator $P:L_{q,v}(\Omega) \rightarrow {\cal P}_{r-1}
(\Omega)$ such that for any subtree ${\cal D}\subset {\cal
A}_{\xi_*}$ rooted at $\xi_*$ and for any function $f\in
W^r_{p,g}(\Omega)$
$$
\|f-Pf\|_{L_{q,v}(\Omega[{\cal D}])} \underset{\mathfrak{Z}}
{\lesssim} M_{\hat u,\hat w}(k)\left\|\frac{\nabla^r
f}{g}\right\|_{L_p(\Omega[{\cal D}])}.
$$
\end{trma}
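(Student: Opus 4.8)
The plan is to reduce the continuous projection estimate to a weighted discrete Hardy inequality on the subtree $\mathcal A_{\xi_*}$, in complete parallel with the companion case $p<q$ of Theorem~\ref{emb_gen_plq_cor}. Throughout I write $h_j:=h(2^{-\overline{s}j})$ and $a_\zeta:=\bigl\|\tfrac{\nabla^r f}{g}\bigr\|_{L_p(\Omega[\zeta])}$, so that $\|\tfrac{\nabla^r f}{g}\|_{L_p(\Omega[\mathcal D])}^p=\sum_{\zeta\in\mathbf V(\mathcal D)}a_\zeta^p$ since the cells $\Omega[\zeta]$ partition $\Omega[\mathcal D]$. As the operator I would take $Pf:=T_{\xi_*}f$, the averaged Taylor polynomial of degree $<r$ associated with the single cell $\Omega[\xi_*]$; being an integral of $f$ against fixed smooth kernels supported in $\Omega[\xi_*]$, where by (\ref{h_cond_1}) the weight $v$ is bounded above and below, it extends to a continuous map $L_{q,v}(\Omega)\to\mathcal P_{r-1}(\Omega)$ depending only on $\xi_*$ and not on $\mathcal D$.

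First I would record two local ingredients. Each $\Omega[\xi]$ with $\xi$ at level $j$ is a John domain of diameter $\asymp 2^{-\overline{s}j}$ at distance $\asymp 2^{-\overline{s}j}$ from $\Gamma$, and on it $g,v$ are constant up to a factor by (\ref{h_cond_1}); hence the Sobolev--Poincar\'e (Bramble--Hilbert) inequality on John domains, together with $u_jw_j=\varphi_g(2^{-\overline{s}j})\varphi_v(2^{-\overline{s}j})2^{-\delta\overline{s}j}$, yields a local polynomial $T_\xi f$ with
$$\|f-T_\xi f\|_{L_{q,v}(\Omega[\xi])}\underset{\mathfrak Z}{\lesssim} u_j w_j\, a_\xi.$$
Secondly, since $T_\zeta f-T_{\zeta'}f$ is a polynomial of degree $<r$, the Nikol'skii inequality lets me transfer its norm from a cell to a comparable sub-cell, and combined with the Poincar\'e bound on adjacent (comparable-scale) cells this gives, for $\xi\ge\zeta$ at levels $j\ge i$ with $\zeta'$ the parent of $\zeta$,
$$\|T_\zeta f-T_{\zeta'}f\|_{L_{q,v}(\Omega[\xi])}\underset{\mathfrak Z}{\lesssim} w_j\, u_i\, a_{\zeta'}.$$
The algebraic cancellation $\varphi_v(2^{-\overline{s}j})\,2^{-(j-i)\overline{s}d/q}\cdot 2^{-\delta\overline{s}i}\varphi_g(2^{-\overline{s}i})=w_ju_i$ is exactly what makes the scale factors collapse.

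Next, telescoping $f-Pf=(f-T_\xi f)+\sum_{t}(T_{\zeta_t}f-T_{\zeta_{t-1}}f)$ along the simple path $\xi_*=\zeta_0<\dots<\zeta_m=\xi$ and summing the two local bounds gives, for every cell,
$$\|f-Pf\|_{L_{q,v}(\Omega[\xi])}\underset{\mathfrak Z}{\lesssim} w_{j(\xi)}\sum_{\xi_*\le\zeta\le\xi}u_{j(\zeta)}\,a_\zeta.$$
Raising to the $q$-th power and summing over $\mathbf V(\mathcal D)$ reduces the whole assertion to the discrete tree Hardy inequality
$$\Bigl(\sum_{\xi}w_{j(\xi)}^q\Bigl(\sum_{\zeta\le\xi}u_{j(\zeta)}a_\zeta\Bigr)^q\Bigr)^{1/q}\underset{\mathfrak Z}{\lesssim} M_{\hat u,\hat w}(k)\Bigl(\sum_\zeta a_\zeta^p\Bigr)^{1/p}.$$
Here I would use the $h$-regularity of $\mathcal A$: ${\rm card}\,\mathbf V^{\mathcal A}_{j-k}(\xi_*)\asymp h_k/h_j$, and each level-$i$ vertex has $\asymp h_i/h_j$ descendants at level $j$ (cf. (\ref{v1le1})). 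Grouping leaves by their level-$i$ ancestors and applying H\"older collapses the branching tree to a one-dimensional chain with the effective weights $\hat w_j=w_j(h_k/h_j)^{1/q}$ and $\hat u_j=u_j(h_j/h_k)^{1/p}$ --- this is precisely the bookkeeping that the definitions of $\hat u,\hat w$ encode. On the chain the inequality is the classical weighted discrete Hardy inequality, whose constant is governed by the Muckenhoupt form (\ref{m_uw_peq}) when $p=q$ and by the Maz'ya--Rozin/Bradley expression (\ref{m_uw_pgq}) when $q<p$.

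I expect the main obstacle to be the tree-to-chain reduction together with the $q<p$ Hardy inequality: one must check that summing $q$-th powers over a \emph{branching} tree and then passing to the $\ell_p$-norm of the gradient ($q\le p$) genuinely produces the Bradley constant (\ref{m_uw_pgq}) and nothing larger, i.e. that the subadditivity $\bigl(\sum_\zeta x_\zeta\bigr)^{q/p}\le\sum_\zeta x_\zeta^{q/p}$ is invoked only where the $h$-regularity of $\mathcal A$ makes it lossless up to a constant depending on $\mathfrak Z$. The Nikol'skii transfer of polynomials across scales and the verification that all comparison constants depend only on $\mathfrak Z$ (through (\ref{h_cond_1}) and (\ref{v1le1})) are routine by comparison. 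This is exactly the scheme carried out in \cite{vas_sib_m_j}, to which I would appeal for the detailed constants.
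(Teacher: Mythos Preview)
The paper does not actually prove Theorem~\ref{emb_gen_pgq}: it is quoted from \cite{vas_sib_m_j}, and the only indication of method is the Remark following Corollary~\ref{cor_pgeq}, which says that the proof rests on the two-weighted tree summation estimates of \cite{vas_hardy_tree}. So there is no detailed argument here to compare against; what can be compared is your scheme versus the scheme the paper points to.

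Your outline is sound and coincides with that scheme at the top level: choose $P=T_{\xi_*}$, use the Sobolev--Poincar\'e/Bramble--Hilbert bound on each John cell $\Omega[\xi]$, transfer the polynomial differences $T_{\zeta_t}f-T_{\zeta_{t-1}}f$ across scales (this is legitimate because $\Omega[{\cal A}_{\zeta_t}]$ has diameter comparable to $\Omega[\zeta_t]$, so the Markov/Nikol'skii constant is controlled), telescope along the path, and arrive at the discrete Hardy inequality on ${\cal A}_{\xi_*}$. Your computation $w_j u_i$ for the transferred increment is correct.

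Where you diverge from the paper's route is in the last step. The paper (via \cite{vas_sib_m_j}) invokes the general two-weighted Hardy inequality on trees from \cite{vas_hardy_tree}, whose $p\ge q$ constant is already expressed in the form that specializes to $M_{\hat u,\hat w}(k)$. You instead collapse the tree to a chain: Minkowski in $\ell_q$ over the level-$j$ vertices, then H\"older over the $\lesssim h_k/h_i$ vertices at level $i$, which produces exactly the factors $(h_k/h_j)^{1/q}$ and $(h_i/h_k)^{1/p}$ and hence the one-dimensional Hardy problem with weights $\hat w_j,\hat u_i$; the classical Muckenhoupt ($p=q$) and Maz'ya--Rozin/Bennett ($q<p$) criteria then give the constant $M_{\hat u,\hat w}(k)$. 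This reduction is correct and is genuinely more elementary than appealing to the full tree theory --- it exploits precisely the two structural features available here (weights depend only on the level, branching is $h$-regular), which the general tree result does not assume. The price is that your argument is specific to this setting, whereas \cite{vas_hardy_tree} covers arbitrary weights on arbitrary trees. Your identification of the tree-to-chain step as the crux is accurate, and your sketch of it is right; the ``subadditivity'' you mention is really the Minkowski/H\"older pair, and the $h$-regularity is what makes the H\"older loss exactly match the definition of $\hat u_i$.
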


Suppose that (\ref{def_h1}), (\ref{ghi_g01}), (\ref{ll1}),
(\ref{beta_v_eq_dtq}) hold.

From (\ref{ghi_g01}), (\ref{beta_v_eq_dtq}) and (\ref{u_w_xi}) it
follows that
\begin{align}
\label{uw_ex}
u(\eta_{j,i})=u_j=2^{\overline{s}j\left(\beta_g-r+\frac
dp\right)}(\overline{s}j)^{-\alpha_g} \rho_g(\overline{s}j), \quad
w(\eta_{j,i})=w_j=2^{-\frac{\theta \overline{s}j} {q}}
(\overline{s}j)^{-\alpha_v} \rho_v(\overline{s}j).
\end{align}

Recall that $\delta =r+\frac dq-\frac dp$.

\begin{Cor}
\label{cor_plq} Let $1<p<q<\infty$, $r\in \N$, $\delta>0$, and let
the conditions (\ref{def_h1}), (\ref{ghi_g01}), (\ref{ll1}),
(\ref{beta_v_eq_dtq}) hold. In addition, we suppose that
\begin{align}
\label{bdl0bde0a} \text{either}\quad \beta-\delta< 0 \quad \text{
or}\quad \beta-\delta=0, \quad \alpha>\frac 1q.
\end{align}
Then $W^r_{p,g}(\Omega) \subset L_{q,v}(\Omega)$ and for any $k\ge
j_*$, $\xi_*\in {\bf V}^{\cal A}_{k-j_*}(\eta_{j_*,1})$ there
exists a linear continuous operator  $P:L_{q,v}(\Omega)
\rightarrow {\cal P}_{r-1} (\Omega)$ such that for any subtree
${\cal D}\subset {\cal A}$ rooted at $\xi_*$ and for any function
$f\in W^r_{p,g}(\Omega)$
$$
\|f-Pf\|_{L_{q,v}(\Omega[{\cal D}])} \underset{\mathfrak{Z}}
{\lesssim} 2^{-(\delta-\beta)\overline{s}k}
(\overline{s}k)^{-\alpha+\frac 1q}
\rho(\overline{s}k)\left\|\frac{\nabla^r
f}{g}\right\|_{L_p(\Omega[{\cal D}])}.
$$
\end{Cor}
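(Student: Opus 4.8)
The strategy is to apply Theorem~\ref{emb_gen_plq_cor} with an appropriate choice of $l_0\in\N$, and then to evaluate the quantity $\sup_{j\ge k} u_j\bigl(\sum_{i\ge j} \frac{h(2^{-\overline{s}j})}{h(2^{-\overline{s}i})}w_i^q\bigr)^{1/q}$ using the explicit expressions \eqref{uw_ex}. From \eqref{def_h1} we have $h(2^{-\overline{s}i})\asymp 2^{-\theta\overline{s}i}(\overline{s}i)^\gamma\tau(\overline{s}i)$, so the ratio $h(2^{-\overline{s}j})/h(2^{-\overline{s}i})$ is, up to the slowly varying factors, $2^{\theta\overline{s}(i-j)}$, and multiplying by $w_i^q=2^{-\theta\overline{s}i}(\overline{s}i)^{-q\alpha_v}\rho_v^q(\overline{s}i)$ gives a summand of order $2^{-\theta\overline{s}j}(\overline{s}i)^{-q\alpha_v}(\overline{s}i)^{-\gamma}\rho_v^q(\overline{s}i)/\tau(\overline{s}i)$. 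Thus the inner sum over $i\ge j$ decouples: the geometric dependence on $i$ has disappeared, and what remains is $\sum_{i\ge j}(\overline{s}i)^{-q\alpha_v-\gamma}\rho_v^q(\overline{s}i)/\tau(\overline{s}i)$, a convergent series by \eqref{beta_v_eq_dtq} (which gives $q\alpha_v>1-\gamma$, i.e.\ $q\alpha_v+\gamma>1$) and Lemma~\ref{sum_lem1}; its tail is $\asymp j\cdot(\overline{s}j)^{-q\alpha_v-\gamma}\rho_v^q(\overline{s}j)/\tau(\overline{s}j)$, i.e.\ $\asymp (\overline{s}j)^{1-q\alpha_v-\gamma}\rho_v^q(\overline{s}j)/\tau(\overline{s}j)$.

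Raising to the power $1/q$ and multiplying by $h(2^{-\overline{s}j})^{1/q}\asymp 2^{-\theta\overline{s}j/q}(\overline{s}j)^{\gamma/q}\tau^{1/q}(\overline{s}j)$ and by $u_j=2^{\overline{s}j(\beta_g-r+d/p)}(\overline{s}j)^{-\alpha_g}\rho_g(\overline{s}j)$, the powers of $\tau$ and of the logarithm partly cancel, and one finds that the $j$-th term is of order $2^{-\overline{s}j(\delta-\beta)}(\overline{s}j)^{1/q-\alpha}\rho(\overline{s}j)$ up to a bounded slowly varying correction, where I have used $\delta=r+d/q-d/p$, $\beta=\beta_g+\beta_v$, $\beta_v=(d-\theta)/q$, and hence $\beta_g-r+d/p+(-\theta)/q=\beta-\delta$. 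Under hypothesis \eqref{bdl0bde0a}: if $\beta-\delta<0$ the sequence $2^{-\overline{s}j(\delta-\beta)}(\overline{s}j)^{1/q-\alpha}\rho(\overline{s}j)$ is eventually decreasing in $j$ (geometric decay dominates, using Lemma~\ref{sum_lem1} to absorb the slowly varying and polylog factors), so its supremum over $j\ge k$ is attained at $j=k$ and equals $2^{-(\delta-\beta)\overline{s}k}(\overline{s}k)^{1/q-\alpha}\rho(\overline{s}k)$ up to constants; if $\beta-\delta=0$ and $\alpha>1/q$, the geometric factor is absent and the term is $(\overline{s}j)^{1/q-\alpha}\rho(\overline{s}j)$, again eventually decreasing by $\alpha>1/q$ and Lemma~\ref{sum_lem1}, so the sup over $j\ge k$ is again $\asymp(\overline{s}k)^{1/q-\alpha}\rho(\overline{s}k)$, which coincides with the claimed bound since $\delta-\beta=0$. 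This also shows the global supremum $\sup_{j\ge j_*}u_j(\cdots)^{1/q}$ is finite, which is the remaining hypothesis of Theorem~\ref{emb_gen_plq_cor} and which yields the embedding $W^r_{p,g}(\Omega)\subset L_{q,v}(\Omega)$.

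It remains to verify the contraction condition \eqref{lsijl_hh}. By the same computation as above applied with $j+l_0$ in place of the lower summation index, $\bigl(\sum_{i\ge j+l_0}\frac{h(2^{-\overline{s}(j+l_0)})}{h(2^{-\overline{s}i})}w_i^q\bigr)^{1/q}\asymp (\overline{s}(j+l_0))^{(1-q\alpha_v-\gamma)/q}\rho_v(\overline{s}(j+l_0))\tau^{-1/q}(\overline{s}(j+l_0))\cdot h(2^{-\overline{s}(j+l_0)})^{1/q}$, and dividing by $w_j$ one gets a ratio of the shape $\bigl(\frac{j+l_0}{j}\bigr)^{O(1)}\cdot c$ where the constant $c$ does not depend on $l_0$; more precisely the ratio tends, as $j\to\infty$ with $l_0$ fixed, to a limit strictly less than $1$ provided $l_0$ is chosen large enough, because the dominant behavior of the tail $\sum_{i\ge j+l_0}(\cdots)$ relative to the $j$-th weight picks up a genuinely small factor from the polynomial decay rate $q\alpha_v+\gamma-1>0$ of the summands. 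Here I would invoke Lemma~\ref{sum_lem1} once more to control $\rho_v(\overline{s}(j+l_0))/\rho_v(\overline{s}j)$ and $\tau(\overline{s}j)/\tau(\overline{s}(j+l_0))$ and $((j+l_0)/j)^{\text{const}}$ uniformly in $j\ge j_*$, so that they are all $\le 2$ say, and then fix $l_0$ so that the leading constant times $16$ (or whatever the accumulated slack is) stays below $1$; note $l_0$ and $\lambda$ depend only on $\mathfrak{Z}$. The main obstacle is precisely this uniform-in-$j$ choice of $l_0$: one must be careful that the slowly varying functions $\rho_v,\tau$ and the ratios of logarithms do not, for small $j$ near $j_*$, spoil the contraction — this is handled by Lemma~\ref{sum_lem1}, which gives the needed uniform two-sided polynomial bounds on such ratios for all $y\ge 1$, but it requires writing the estimate carefully rather than just taking limits. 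Once \eqref{lsijl_hh} is in hand, Theorem~\ref{emb_gen_plq_cor} delivers the operator $P$ and the stated bound, completing the proof.
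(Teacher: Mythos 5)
You follow the same route as the paper: compute the tail sum, which gives
$\sum_{i\ge j}\frac{h(2^{-\overline{s}j})}{h(2^{-\overline{s}i})}w_i^q
\asymp 2^{-\theta\overline{s}j}(\overline{s}j)^{1-\alpha_v q}\rho_v^q(\overline{s}j)$,
deduce that $\sup_{j\ge k}u_j\bigl(\sum_{i\ge j}\frac{h(2^{-\overline{s}j})}{h(2^{-\overline{s}i})}w_i^q\bigr)^{1/q}
\asymp 2^{-(\delta-\beta)\overline{s}k}(\overline{s}k)^{-\alpha+\frac 1q}\rho(\overline{s}k)$ under (\ref{bdl0bde0a}), and apply Theorem \ref{emb_gen_plq_cor}; these two computations are correct and coincide with the paper's (\ref{sl_ij_inf}) and the subsequent estimate.

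The gap is in your verification of (\ref{lsijl_hh}). You claim that after dividing the tail at level $j+l_0$ by $w_j$ the quotient has the shape $\bigl(\frac{j+l_0}{j}\bigr)^{O(1)}\cdot c$ with $c$ independent of $l_0$ and that, for fixed $l_0$, it tends to a limit which is $<1$ once $l_0$ is large. Your own asymptotics contradict this: the numerator is $\asymp 2^{-\theta\overline{s}(j+l_0)/q}(\overline{s}(j+l_0))^{\frac 1q-\alpha_v}\rho_v(\overline{s}(j+l_0))$, while $w_j=2^{-\theta\overline{s}j/q}(\overline{s}j)^{-\alpha_v}\rho_v(\overline{s}j)$, so the quotient is $\asymp 2^{-\theta\overline{s}l_0/q}\,(\overline{s}(j+l_0))^{1/q}$ times factors bounded above and below uniformly in $j$ (by Lemma \ref{sum_lem1}); it is therefore unbounded in $j$ for every fixed $l_0$, not convergent to a small limit. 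This extra $(\overline{s}j)^{1/q}$ is exactly the signature of the limiting case $\beta_v=\frac{d-\theta}{q}$, $\alpha_v q+\gamma>1$: the tail sum is no longer comparable to its first term $w_j^q$ but exceeds it by a factor $\asymp\overline{s}j$, so a contraction measured against the single weight $w_j$ cannot hold here. What does follow from (\ref{sl_ij_inf}) and (\ref{rho_yy1}) — and this is all that the paper's one-line deduction can mean — is the tail-to-tail contraction: the ratio of $\bigl(\sum_{i\ge j+l_0}\frac{h(2^{-\overline{s}(j+l_0)})}{h(2^{-\overline{s}i})}w_i^q\bigr)^{1/q}$ to $\bigl(\sum_{i\ge j}\frac{h(2^{-\overline{s}j})}{h(2^{-\overline{s}i})}w_i^q\bigr)^{1/q}$ is $\asymp 2^{-\theta\overline{s}l_0/q}$ uniformly in $j\ge j_*$ (the logarithmic and slowly varying factors cancel up to constants), hence $\le\lambda<1$ for suitable $l_0=l_0(\mathfrak{Z})$. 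So you should verify the contraction in this tail-versus-tail form (i.e.\ with the level-$j$ tail, which dominates $w_j$, in the denominator), which is the form in which the summation-operator machinery behind Theorem \ref{emb_gen_plq_cor} is actually usable in this limiting case; as written, your limit argument for (\ref{lsijl_hh}) does not go through.
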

\begin{proof}
From (\ref{def_h1}) and (\ref{uw_ex}) it follows that
\begin{align}
\label{sl_ij_inf}
\begin{array}{c}
\sum \limits _{i=j}^\infty \frac{h(2^{-\overline{s}j})}{
h(2^{-\overline{s}i})}w_i^q =\sum \limits _{i=j}^\infty 2^{-\theta
\overline{s}i} (\overline{s}i)^{-\alpha_vq}
\rho_v^q(\overline{s}i)\cdot \frac{2^{\theta
\overline{s}i}(\overline{s}j)^{\gamma}\tau(\overline{s}j)}
{2^{\theta\overline{s}j} (\overline{s}i)^\gamma
\tau(\overline{s}i)}
\stackrel{(\ref{beta_v_eq_dtq}),(\ref{rho_yy1})}{\underset{\mathfrak{Z}}{\asymp}}
\\
\asymp 2^{-\theta\overline{s}j}[\overline{s}j]^{-\alpha_vq+1}
\rho_v^q(\overline{s}j).
\end{array}
\end{align}
This together with Lemma \ref{sum_lem1} implies (\ref{lsijl_hh}).
Further,
$$
\sup _{j\ge k} u_j \left(\sum \limits _{i\ge j}
\frac{h(2^{-\overline{s}j})}{ h(2^{-\overline{s}i})}w_i^q
\right)^{\frac 1q}
\stackrel{(\ref{beta_v_eq_dtq}),(\ref{uw_ex}),(\ref{bdl0bde0a}),(\ref{sl_ij_inf})}
{\underset{\mathfrak{Z}}{\asymp}}
2^{-(\delta-\beta)\overline{s}k}(\overline{s}k)^{-\alpha+\frac 1q}
\rho(\overline{s}k).
$$
It remains to apply Theorem \ref{emb_gen_plq_cor}.
\end{proof}

Let us consider the case $p\ge q$. We apply Theorem
\ref{emb_gen_pgq}. For $j\ge k$, we have
\begin{align}
\label{hat_u} \begin{array}{c} \hat u_j\stackrel{(\ref{def_h1}),
(\ref{uw_ex})}{=} 2^{\overline{s}j \left(\beta_g-r+
\frac{d}{p}\right)}(\overline{s}j) ^{-\alpha_g}
\rho_g(\overline{s}j) \cdot 2^{-\frac{\theta
\overline{s}(j-k)}{p}}\frac{j^{\frac{\gamma}{p}}\tau^{\frac{1}{p}}
(\overline{s}j)}{k^{\frac{\gamma}{p}}\tau^{\frac{1}{p}}(\overline{s}k)},
\\ \hat w_j \stackrel{(\ref{def_h1}), (\ref{uw_ex})}{=}
2^{-\frac{\theta \overline{s}k}{q}}(\overline{s}j) ^{-\alpha_v}
\rho_v(\overline{s}j) \cdot \frac{k^{\frac{\gamma}{q}}
\tau^{\frac{1}{q}} (\overline{s}k)} {j^{\frac{\gamma}{q}}
\tau^{\frac{1}{q}}(\overline{s}j)}.
\end{array}
\end{align}
\begin{Cor}
\label{cor_pgeq} Let $1<p\le\infty$, $1\le q<\infty$, $p\ge q$,
$r\in \N$, $\delta>0$ and let conditions (\ref{def_h1}),
(\ref{ghi_g01}), (\ref{ll1}), (\ref{beta_v_eq_dtq}) hold. Suppose
that either $\beta-\delta+\theta\left(\frac 1q-\frac 1p\right)<0$
or $\beta-\delta+\theta\left(\frac 1q-\frac 1p\right)=0$ and
$\alpha-1-(1-\gamma)\left(\frac 1q-\frac 1p\right)>0$. Then
$W^r_{p,g}(\Omega) \subset L_{q,v}(\Omega)$ and for any $k\ge
j_*$, $\xi_*\in {\bf V}^{\cal A}_{k-j_*}(\eta_{j_*,1})$ there
exists a linear continuous operator $P:L_{q,v}(\Omega) \rightarrow
{\cal P}_{r-1} (\Omega)$ such that for any subtree ${\cal
D}\subset {\cal A}$ rooted at $\xi_*$ and for any function $f\in
W^r_{p,g}(\Omega)$
$$
\|f-Pf\|_{L_{q,v}(\Omega[{\cal D}])} \underset{\mathfrak{Z}}
{\lesssim} 2^{-(\delta-\beta)\overline{s}k}
(\overline{s}k)^{-\alpha+\frac 1q}
\rho(\overline{s}k)\left\|\frac{\nabla^r
f}{g}\right\|_{L_p(\Omega[{\cal D}])}
$$
in the case $\beta-\delta+\theta\left(\frac 1q-\frac 1p\right)<0$,
and
$$
\|f-Pf\|_{L_{q,v}(\Omega[{\cal D}])} \underset{\mathfrak{Z}}
{\lesssim} 2^{-\theta\left(\frac 1q-\frac 1p\right)\overline{s}k}
(\overline{s}k)^{-\alpha+1+\frac 1q-\frac 1p}
\rho(\overline{s}k)\left\|\frac{\nabla^r
f}{g}\right\|_{L_p(\Omega[{\cal D}])}
$$
in the case $\beta-\delta+\theta\left(\frac 1q-\frac 1p\right)=0$.
\end{Cor}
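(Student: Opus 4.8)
The plan is to deduce the corollary from Theorem \ref{emb_gen_pgq}: it suffices to check that the quantity $M_{\hat u,\hat w}(k)$ of (\ref{m_uw_peq}) (when $p=q$) or (\ref{m_uw_pgq}) (when $q<p$) is finite and to compute its order, since the conclusion of Theorem \ref{emb_gen_pgq}, applied with $\xi_*$ as in the statement and with $\Omega[{\cal A}_{\eta_{j_*,1}}]=\Omega$, then yields at once the embedding $W^r_{p,g}(\Omega)\subset L_{q,v}(\Omega)$ (take $k=j_*$) and the two displayed inequalities (with $M_{\hat u,\hat w}(k)$ replaced by its value). First I substitute the explicit formulas (\ref{hat_u}) and record the identities $\beta_g-r+\frac dp=\beta_g+\frac dq-\delta$ and, using $\beta_v=\frac{d-\theta}{q}$, $b:=\beta_g-r+\frac dp-\frac\theta p=\beta-\delta+\theta\bigl(\frac 1q-\frac 1p\bigr)$, so that the two cases of the corollary are exactly $b<0$ and $b=0$. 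Next I estimate the inner tail: by (\ref{hat_u}), $\sum_{i\ge j}\hat w_i^q=2^{-\theta\overline{s}k}(\overline{s}k)^\gamma\tau(\overline{s}k)\sum_{i\ge j}(\overline{s}i)^{-\alpha_vq-\gamma}\rho_v^q(\overline{s}i)\tau^{-1}(\overline{s}i)$; by (\ref{beta_v_eq_dtq}) one has $\alpha_vq+\gamma>1$, the series converges, and Lemma \ref{sum_lem1} together with the elementary bound $\sum_{i\ge j}i^{-c}\Lambda(i)\asymp j^{1-c}\Lambda(j)$ ($c>1$, $\Lambda$ slowly varying) gives $\sum_{i\ge j}\hat w_i^q\asymp 2^{-\theta\overline{s}k}(\overline{s}k)^\gamma\tau(\overline{s}k)(\overline{s}j)^{1-\alpha_vq-\gamma}\rho_v^q(\overline{s}j)\tau^{-1}(\overline{s}j)$.

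I then treat $B_j:=\sum_{i=k}^j\hat u_i^{p'}$. By (\ref{hat_u}), up to a $j$-independent factor the summand is $2^{b\overline{s}ip'}(\overline{s}i)^{p'(\gamma/p-\alpha_g)}\rho_g^{p'}(\overline{s}i)\tau^{p'/p}(\overline{s}i)$. If $b<0$, the factor $2^{b\overline{s}ip'}$ produces geometric decay, so by Lemma \ref{sum_lem1} $B_j$ is comparable to its first term, whence $B_j^{1/p'}\asymp 2^{\overline{s}k(\beta_g-r+d/p)}(\overline{s}k)^{-\alpha_g}\rho_g(\overline{s}k)$ (the $(\overline{s}k)^{\gamma/p}$- and $\tau$-prefactors cancelling), independently of $j$. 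Inserting these two estimates into (\ref{m_uw_peq}) --- where the supremum over $j$ of the $\hat w$-tail is attained at $j=k$ --- or into (\ref{m_uw_pgq}) --- a convergent sum again handled by the leading-term rule and Lemma \ref{sum_lem1} --- and simplifying using $p=q\Rightarrow\delta=r$ and the arithmetic $\tfrac{pq}{p-q}\cdot\bigl(\tfrac 1q-\tfrac 1p\bigr)=1$, $q\bigl(\tfrac{pq}{p-q}/p+1\bigr)=\tfrac{pq}{p-q}$, one obtains $M_{\hat u,\hat w}(k)\asymp 2^{-(\delta-\beta)\overline{s}k}(\overline{s}k)^{\frac 1q-\alpha}\rho(\overline{s}k)$, which is the first assertion.

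If $b=0$ the geometric factor is absent. Here I first note that the assumption $\alpha-1-(1-\gamma)\bigl(\frac 1q-\frac 1p\bigr)>0$ together with $\alpha_v>\frac{1-\gamma}{q}$ forces $p'(\gamma/p-\alpha_g)>-1$, so $B_j$ is now a growing partial sum, $B_j^{1/p'}\asymp 2^{\overline{s}k\theta/p}(\overline{s}k)^{-\gamma/p}\tau^{-1/p}(\overline{s}k)(\overline{s}j)^{\gamma/p-\alpha_g+1/p'}\rho_g(\overline{s}j)\tau^{1/p}(\overline{s}j)$. Substituting this $j$-dependent $B_j$ into (\ref{m_uw_peq})/(\ref{m_uw_pgq}) and computing the exponent of $\overline{s}j$ in the resulting outer series, one checks that it is $<-1$ precisely when $\alpha>1+(1-\gamma)\bigl(\frac 1q-\frac 1p\bigr)$ --- exactly the stated hypothesis, which for $p=q$ also makes the supremum attained at $j=k$ --- so $M_{\hat u,\hat w}(k)<\infty$; collapsing the exponents then gives $M_{\hat u,\hat w}(k)\asymp 2^{-\theta(\frac 1q-\frac 1p)\overline{s}k}(\overline{s}k)^{-\alpha+1+\frac 1q-\frac 1p}\rho(\overline{s}k)$, the second assertion.

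The only real difficulty is the bookkeeping: carrying the powers of $2^{\overline{s}k}$, of $\overline{s}k$, and of $\rho_g,\rho_v,\tau,\overline{s}j$ through the double sum (\ref{m_uw_pgq}) with its exponent $\frac{pq}{p-q}$, and verifying in the limiting case that convergence of the outer sum is equivalent to the displayed condition on $\alpha$. Once Lemma \ref{sum_lem1} is used to replace every slowly varying factor by its value at an endpoint, this is elementary but lengthy arithmetic with the exponents; the qualitative structure --- whether the $\hat u$-sum decays geometrically or grows polynomially --- is dictated entirely by the sign of $b=\beta-\delta+\theta(\frac1q-\frac1p)$.
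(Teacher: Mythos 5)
Your overall route is the paper's: apply Theorem \ref{emb_gen_pgq} with $u$, $w$ from (\ref{u_w_xi}), substitute (\ref{hat_u}), observe that $\beta_g-r+\frac dp-\frac{\theta}{p}=\beta-\delta+\theta\left(\frac 1q-\frac 1p\right)$ by (\ref{beta_v_eq_dtq}), and reduce everything to estimating $M_{\hat u,\hat w}(k)$ in (\ref{m_uw_peq}), (\ref{m_uw_pgq}) via Lemma \ref{sum_lem1}; your treatment of the case $\beta-\delta+\theta\left(\frac 1q-\frac 1p\right)<0$ (geometric decay of the $\hat u$-sum, tail of the $\hat w$-sum controlled by $\alpha_vq+\gamma>1$) and your convergence criterion for the outer sum in the limiting case (equivalent, after the exponent arithmetic, to $\alpha>1+(1-\gamma)\left(\frac 1q-\frac 1p\right)$) agree with the paper's computation leading to (\ref{mmm}), (\ref{mmm2}) and (\ref{mmm3}).

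There is, however, one genuine gap in your limiting case $\beta-\delta+\theta\left(\frac 1q-\frac 1p\right)=0$: the assertion that the hypotheses ``force $p'(\gamma/p-\alpha_g)>-1$'' is false. The assumptions bound $\alpha=\alpha_g+\alpha_v$ from below and $\alpha_v$ from below, but put no upper bound on $\alpha_g$: e.g.\ for $p=q=2$, $\gamma=0$, $\alpha_g=5$, $\alpha_v=1$ one has $\alpha>1$ and $\alpha_v>\frac{1-\gamma}{q}$, yet $p'\left(\frac{\gamma}{p}-\alpha_g\right)=-10$. Consequently your formula for $B_j=\sum_{i=k}^j\hat u_i^{p'}$ as a \emph{growing} partial sum, on which the whole second-case computation rests, need not hold; when $-\alpha_g+\frac{\gamma}{p}+\frac{1}{p'}\le 0$ the inner sum is bounded (or borderline, producing a logarithm) and your subsequent substitution into (\ref{m_uw_peq})/(\ref{m_uw_pgq}) is not justified as written. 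The paper avoids this by a monotonization trick: one may assume $-\alpha_g+\frac{\gamma}{p}+\frac{1}{p'}>0$, since otherwise $\hat u_j$ can be replaced by $\hat u_j\,(j/k)^c$ with a suitably small $c>0$, which only increases $M_{\hat u,\hat w}(k)$ and (using $\alpha_v>\frac{1-\gamma}{q}$, respectively $\alpha>1+(1-\gamma)\left(\frac 1q-\frac 1p\right)$, to keep the relevant sums/suprema dominated by their values near $j=k$) still yields (\ref{mmm1}) and (\ref{mmm3}). Either this trick, or a separate direct treatment of the convergent and borderline regimes of $B_j$, is needed to close your argument; with that repair the rest of your exponent bookkeeping goes through.
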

\begin{proof}
Let $p=q$. Applying (\ref{hat_u}) and (\ref{m_uw_peq}) and taking
into account that $\alpha_v>\frac{1-\gamma}{q}$ and $\beta_g-r+
\frac{d}{p}-\frac{\theta}{p}\stackrel{(\ref{beta_v_eq_dtq})}{=}\beta-\delta$,
we get
$$
M_{\hat u,\hat w}(k)\underset{\mathfrak{Z}}{\lesssim}\sup _{l\ge
k} (\overline{s}l) ^{-\alpha_v+ \frac{1-\gamma}{q}}
\rho_v(\overline{s}l) \tau^{-\frac{1}{q}}(\overline{s}l)
\left(\sum \limits _{j=k}^l 2^{p'(\beta-\delta)\overline{s}j}
(\overline{s}j)^{p'\left(-\alpha_g+\frac{\gamma}
{p}\right)}\rho_g^{p'}(\overline{s}j)\tau^{\frac{p'}{p}}
(\overline{s}j)\right)^{\frac{1}{p'}}.
$$
If $\beta-\delta<0$, then by Lemma \ref{sum_lem1}
\begin{align}
\label{mmm} M_{\hat u,\hat w}(k) \underset{\mathfrak{Z}}
{\lesssim} 2^{(\beta-\delta)\overline{s}k}
(\overline{s}k)^{-\alpha+\frac 1q} \rho(\overline{s}k).
\end{align}
Let $\beta-\delta=0$. We may assume that
$-\alpha_g+\frac{\gamma}{p}+\frac{1}{p'}>0$ (otherwise, we
multiply $\hat u_j$ by $\frac{j^c}{k^c}$ with some $c>0$). Then
\begin{align}
\label{mmm1} M_{\hat u,\hat w}(k) \underset{\mathfrak{Z}}
{\lesssim} (\overline{s}k)^{-\alpha+1} \rho(\overline{s}k).
\end{align}

Let $p>q$. Applying (\ref{hat_u}) and (\ref{m_uw_pgq}) and taking
into account that $\alpha_v>\frac{1-\gamma}{q}$ and $\beta_g-r+
\frac{d}{p}-\frac{\theta}{p}\stackrel{(\ref{beta_v_eq_dtq})}{=}\beta-\delta+\theta\left(\frac
1q-\frac 1p\right)$, we get
$$
M_{\hat u,\hat w}(k)
\stackrel{(\ref{beta_v_eq_dtq})}{\underset{\mathfrak{Z}}{\lesssim}}
2^{-\theta \overline{s}k\left(\frac 1q-\frac 1p\right)}
(\overline{s}k)^{\gamma\left(\frac 1q-\frac 1p\right)} \tau^{\frac
1q-\frac 1p}(\overline{s}k)\times
$$
$$
\times \left(\sum \limits_{j=k}^\infty (\overline{s}j)
^{\frac{pq}{p-q}\left(-\alpha_v-\frac{\gamma}{q}+\frac 1p\right)}
\rho_v^{\frac{pq}{p-q}}(\overline{s}j) \tau^{-\frac{p}{p-q}}
(\overline{s}j) \sigma(j)^{\frac{pq}{p-q}}\right)^{\frac 1q-\frac
1p},
$$
where
$$
\sigma(j)=\left(\sum \limits _{i=k}^j
2^{\overline{s}i\left(\beta-\delta+\theta\left(\frac 1q-\frac
1p\right)\right)p' } (\overline{s}i)^{p'\left(-\alpha_g+
\frac{\gamma}{p}\right)} \rho_g^{p'}(\overline{s}i)
\tau^{\frac{p'}{p}}(\overline{s}i) \right)^{\frac{1}{p'}}.
$$
If $\beta-\delta+\theta\left(\frac 1q-\frac 1p\right)<0$, then
$$
\sigma(j) \underset{\mathfrak{Z}}{\lesssim}
2^{\overline{s}k\left(\beta-\delta+\theta\left(\frac 1q-\frac
1p\right)\right)}(\overline{s}k)^{\left(-\alpha_g+
\frac{\gamma}{p}\right)} \rho_g(\overline{s}k)
\tau^{\frac{1}{p}}(\overline{s}k),
$$
and by the second relation in (\ref{beta_v_eq_dtq}) we have
\begin{align}
\label{mmm2} M_{\hat u,\hat w}(k) \underset{\mathfrak{Z}}
{\lesssim} 2^{\left(\beta-\delta\right)\overline{s}k}
(\overline{s}k)^{-\alpha+\frac 1q} \rho(\overline{s}k).
\end{align}
If $\beta-\delta+\theta\left(\frac 1q-\frac 1p\right)=0$ and
$\alpha>1+(1-\gamma)\left(\frac 1q-\frac 1p\right)$, then we may
assume that $-\alpha_g+\frac{\gamma}{p}+ \frac{1}{p'}>0$. We have
\begin{align}
\label{mmm3} M_{\hat u,\hat w}(k) \underset{\mathfrak{Z}}
{\lesssim} 2^{-\theta\left(\frac 1q-\frac 1p\right)\overline{s}k}
(\overline{s}k)^{-\alpha+1+\frac 1q-\frac 1p} \rho(\overline{s}k).
\end{align}
This completes the proof.
\end{proof}

\begin{Rem} Notice that in order to prove Theorems \ref{emb_gen_plq_cor} and
\ref{emb_gen_pgq} we use estimates for norms of summation
operators on trees, which are obtained in \cite{vas_hardy_tree}.
If $\beta-\delta+\theta\left(\frac 1q-\frac 1p\right)_+<0$, then
these estimates can be proved easier (we argue similarly as in
\cite[Lemma 5.1]{vas_vl_raspr2}).
\end{Rem}

Applying Corollaries \ref{cor_plq} and \ref{cor_pgeq} and arguing
similarly as in \cite[Theorem 1]{vas_width_raspr}, we obtain the
desired upper estimate of widths.

\renewcommand{\proofname}{\bf Proof}

\section{Proof of the lower estimate}

In this section, we obtain the lower estimates of widths in
Theorem \ref{main}.

If $\frac{\delta-\beta}{\theta}> \frac{\delta}{d}$, then by
Theorem \ref{sob_dn} (see also Remark \ref{non_w}) and by the
upper estimate of $\vartheta_n(W^r_{p,g}(\Omega), \,
L_{q,v}(\Omega))$, which is already obtained, we have
$\vartheta_n(W^r_{p,g}(\Omega), \, L_{q,v}(\Omega))
\underset{\mathfrak{Z}}{\lesssim} \vartheta_n(W^r_p([0, \, 1]^d),
\, L_q([0 ,\, 1]^d))$. On the other hand, there is a cube
$\Delta\subset \Omega$ with side length $l(\Delta)
\underset{\mathfrak{Z}_*}{\asymp} 1$ such that $g(x)
\underset{\mathfrak{Z}_*}{\asymp} 1$, $v(x) \underset
{\mathfrak{Z}_*}{\asymp} 1$ for any $x\in \Delta$ (see
\cite{vas_width_raspr}). Hence, $\vartheta_n(W^r_{p,g}(\Omega), \,
L_{q,v}(\Omega)) \underset{\mathfrak{Z}_*}{\gtrsim}
\vartheta_n(W^r_p([0, \, 1]^d), \, L_q([0 ,\, 1]^d))$. Thus, we
obtained the order estimates of widths in the case
$\frac{\delta-\beta}{\theta}>\frac{\delta}{d}$.

Consider the case $\frac{\delta-\beta}{\theta}\le
\frac{\delta}{d}$. In order to obtain the lower estimates we argue
similarly as in \cite{vas_width_raspr}. It is sufficient to prove
the following assertions.

\begin{Sta}
\label{low_est_sta1} Let $\frac{\delta-\beta}{\theta}\le
\frac{\delta}{d}$. Suppose that one of the following conditions
holds: 1) $\beta-\delta +\theta\left(\frac 1q-\frac 1p\right)_+<0$
or 2) $\beta=\delta$, $p<q$. Then there exist
$t_0=t_0(\mathfrak{Z}_*)\in \N$ and $\hat k=\hat
k(\mathfrak{Z}_*)\in \N$ such that for any $t\in \N$, $t\ge t_0$
there exist functions $\psi_{j,t}\in C^\infty_0(\R^d)$ $(1\le j\le
j_t)$ with pairwise non-overlapping supports such that
\begin{align}
\label{j_t_card_low_est} j_t\underset{\mathfrak{Z}_*}{\gtrsim}
2^{\theta \hat kt}(\hat kt)^{-\gamma} \tau^{-1}(\hat kt),
\end{align}
\begin{align}
\label{psi_jt_norm_est} \left\|\frac{\nabla^r
\psi_{j,t}}{g}\right\|_{L_p(\Omega)}=1, \quad
\|\psi_{j,t}\|_{L_{q,v}(\Omega)}
\underset{\mathfrak{Z}_*}{\gtrsim} 2^{(\beta-\delta)\hat k t}
(\hat kt)^{-\alpha +\frac 1q} \rho(\hat kt).
\end{align}
\end{Sta}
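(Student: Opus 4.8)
The plan is to construct the bump functions $\psi_{j,t}$ as rescaled fixed profiles supported in the subdomains $\Omega[\eta_{j,i}]$ that lie at distance $\asymp 2^{-\overline{s}j}$ from $\Gamma$, for a single well-chosen generation $j$. First I would fix a $C^\infty_0$ profile $\varphi$ on the unit cube with $\|\nabla^r\varphi\|_{L_p}\asymp 1$ and $\|\varphi\|_{L_q}\asymp 1$; then, recalling that ${\rm diam}\,\Omega[\eta_{j,i}]\asymp 2^{-\overline{s}j}$ and (by the $h$-set structure) that each $\Omega[\eta_{j,i}]$ contains a cube of comparable side length (this is the geometric input from \cite{vas_vl_raspr, vas_vl_raspr2} used already for the upper estimate), I would place an affinely rescaled copy of $\varphi$, of size $c\cdot 2^{-\overline{s}j}$, inside each $\Omega[\eta_{j,i}]$. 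Choosing $\hat k=\overline{s}$ and $j=\hat kt$ (more precisely $j$ with $\overline{s}j\asymp \hat kt$, renaming constants), the number of available indices is ${\rm card}\,{\bf V}^{\cal A}_{j-j_*}(\eta_{j_*,1})\asymp h(2^{-\overline{s}j_*})/h(2^{-\overline{s}j})\asymp 2^{\theta\overline{s}j}(\overline{s}j)^{-\gamma}\tau^{-1}(\overline{s}j)$ by \eqref{def_h1}, which gives \eqref{j_t_card_low_est}; the supports are pairwise non-overlapping since the $\Omega[\eta_{j,i}]$ are.

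Next I would normalize. On the support of the rescaled bump, both ${\rm dist}_{|\cdot|}(x,\Gamma)$ and hence $g(x)=\varphi_g({\rm dist})$ and $v(x)=\varphi_v({\rm dist})$ are $\asymp$ their values at scale $2^{-\overline{s}j}$, by \eqref{h_cond_1} together with Lemma \ref{sum_lem1}. A standard change of variables gives $\|\nabla^r\psi\|_{L_p}\asymp 2^{(\frac dp-r)\overline{s}j}\varphi_g(2^{-\overline{s}j})^{-1}\cdot\big(\text{normalization}\big)$ — i.e. $\|\nabla^r\psi/g\|_{L_p(\Omega)}\asymp 2^{-\overline{s}j(r-d/p)}\varphi_g(2^{-\overline{s}j})^{-1}\cdot\|\nabla^r\varphi\|_{L_p}$, which in the notation \eqref{u_w_xi} is exactly $u_j^{-1}$ up to constants; rescaling $\psi$ by the reciprocal of this quantity produces the first equality in \eqref{psi_jt_norm_est}. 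Then $\|\psi_{j,t}\|_{L_{q,v}}\asymp u_j^{-1}\cdot 2^{-\frac{d}{q}\overline{s}j}\varphi_v(2^{-\overline{s}j})=w_j/u_j$. Substituting \eqref{uw_ex} and using $\beta_g-r+\frac dp=\beta-\delta$ (from $\beta_v=\frac{d-\theta}{q}$, i.e. \eqref{beta_v_eq_dtq}), one computes $w_j/u_j\asymp 2^{(\beta-\delta)\overline{s}j}(\overline{s}j)^{-\alpha_g-\alpha_v}\rho_g(\overline{s}j)\rho_v(\overline{s}j)=2^{(\beta-\delta)\overline{s}j}(\overline{s}j)^{-\alpha}\rho(\overline{s}j)$; comparing with the target $2^{(\beta-\delta)\hat kt}(\hat kt)^{-\alpha+\frac1q}\rho(\hat kt)$, the extra factor $(\hat kt)^{1/q}$ is absorbed exactly by summing over the $j_t$ disjoint bumps — no, rather, one takes a \emph{sum} of bumps: this discrepancy is the reason the statement is phrased with many functions, and I address it in the last paragraph.

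Actually the cleanest route, which I would carry out, is to not normalize each bump individually but to note that the lower bound for widths will be obtained (in the omitted argument "arguing as in \cite{vas_width_raspr}") by reducing to a finite-dimensional ball; so here it suffices that \emph{one} representative bump per index satisfies \eqref{psi_jt_norm_est} with the stated exponents. The factor $(\hat kt)^{1/q}$ versus the naive $w_j/u_j$ comes from the fact that in the width lower bound one uses not a single generation but a geometric range of generations $j\in[\hat kt, C\hat kt]$ and sums; concretely, the sum $\sum_{i\ge j}\frac{h(2^{-\overline{s}j})}{h(2^{-\overline{s}i})}w_i^q\asymp 2^{-\theta\overline{s}j}(\overline{s}j)^{-\alpha_v q+1}\rho_v^q(\overline{s}j)$ from \eqref{sl_ij_inf} already exhibits exactly this gained power of the logarithm, and matching it is the content of Corollary \ref{cor_plq}. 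So I would take $\psi_{j,t}$ to be suitable linear combinations (with disjoint supports over a dyadic block of generations) and verify \eqref{psi_jt_norm_est} by the same computation as in \eqref{sl_ij_inf}. The main obstacle, and the step needing the most care, is precisely this bookkeeping of the logarithmic factor: ensuring that the construction uses enough generations to produce the $(\hat kt)^{1/q}$ gain in the $L_{q,v}$-norm while keeping the $L_p$-derivative norm of order $1$ and keeping $j_t$ as large as \eqref{j_t_card_low_est} demands — in particular checking that the slowly varying functions $\rho$, $\tau$ can be pulled out of the sums uniformly, which is exactly where Lemma \ref{sum_lem1} is invoked. The case distinction (1) $\beta-\delta+\theta(1/q-1/p)_+<0$ versus (2) $\beta=\delta$, $p<q$ only affects whether the geometric sum over generations converges at the near or the far end, so the same construction works with the block of generations chosen accordingly.
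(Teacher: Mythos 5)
There is a genuine gap, and it sits exactly at the point you flag yourself: the factor $(\hat kt)^{1/q}$ in (\ref{psi_jt_norm_est}). Bumps placed inside the subdomains $\Omega[\eta_{j,i}]$, i.e.\ at distance comparable to their diameter from $\Gamma$, see essentially constant weights, so each normalized bump gives only $u_jw_j\asymp 2^{(\beta-\delta)\hat kt}(\hat kt)^{-\alpha}\rho(\hat kt)$, and neither of your proposed repairs produces the missing power of the logarithm. Linear combinations of such bumps with pairwise disjoint supports over a block of generations cannot help: writing $b_i=c_i/u_i$, for $p<q$ one has $\bigl(\sum_i |b_i|^q(u_iw_i)^q\bigr)^{1/q}\le \sup_i u_iw_i\,\|b\|_{\ell_p}$, so concentration on a single bump is optimal up to constants and the best achievable ratio is $\sup_i u_iw_i$, with no logarithmic gain; for $p\ge q$ the H\"older-optimal combination gives $\bigl(\sum_s N_s(u_{t+s}w_{t+s})^{\frac{pq}{p-q}}\bigr)^{\frac1q-\frac1p}$, and under $\beta-\delta+\theta\bigl(\frac1q-\frac1p\bigr)<0$ this geometric sum is dominated by its first term, again $\asymp u_tw_t$. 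The appeal to (\ref{sl_ij_inf}) and Corollary \ref{cor_plq} does not close this: those are upper-bound computations for the embedding constant and supply no test functions. Also, (\ref{j_t_card_low_est}) cannot be read off the tree, since only an upper bound for ${\rm card}\,{\bf V}^{\cal A}_{j'-j}(\eta_{j,i})$ is available; the paper gets the count from the measure $\mu$ and the Vitali theorem, via ${\rm card}\,{\cal N}'\cdot h(2^{-\hat kt})\gtrsim\mu(\Gamma)$ in (\ref{crd_n_pr}).

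The paper's construction is essentially different. For each cube $\Delta\in\Xi_{\hat kt}\bigl(\bigl[-\frac12,\frac12\bigr]^d\bigr)$ meeting $\Gamma$ it takes a \emph{single} plateau bump $\psi_\Delta$ at scale $2^{-\hat kt}$ which equals $1$ on a cube containing a point of $\Gamma$ well inside it (see (\ref{dist_xdel_2m}), (\ref{psi_del_x_def}), (\ref{supp_psi_del})). The factor $(\hat kt)^{1/q}$ then arises from integrating $v^q$ over the distance layers $E_l(\Delta)\setminus\hat E_{l+1}(\Delta)$ inside the plateau: because $\beta_v=\frac{d-\theta}{q}$, all layers contribute with the same geometric order, and summing over $l$ gains one power of $\hat kt$ (Lemma \ref{c_del}). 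This step requires the lower bound (\ref{mes_el_bel}) for ${\rm mes}\,E_l(\Delta)$, proved from the Vitali theorem, the $h$-set condition and the flexible cone (John) property of $\Omega$ --- an ingredient entirely absent from your argument. On the other side, $\bigl\|\frac{\nabla^r\psi_\Delta}{g}\bigr\|_{L_p}$ stays comparable to its outermost layer because $\beta_g+\frac{d-\theta}{p}>0$, which is precisely where the hypothesis $\frac{\delta-\beta}{\theta}\le\frac{\delta}{d}$ enters ((\ref{bg_dp})); so the normalization does not destroy the gain. Your single-scale (or stacked disjoint-bump) scheme would only prove the analogue of Proposition \ref{low_est_sta1} with exponent $-\alpha$ in place of $-\alpha+\frac1q$, i.e.\ without the logarithmic improvement that distinguishes this limiting case.
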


\begin{Sta}
\label{low_est_sta2} Let $\beta-\delta +\theta\left(\frac 1q-\frac
1p\right)=0$, $p\ge q$. Then there exist
$t_0=t_0(\mathfrak{Z}_*)\in \N$ and $\hat k=\hat
k(\mathfrak{Z}_*)\in \N$ such that for any $t\in \N$, $t\ge t_0$
there exist functions $\psi_{j,t}\in C^\infty_0(\R^d)$ $(1\le j\le
j_t)$ with pairwise non-overlapping supports such that
\begin{align}
\label{j_t_card_low_est1} j_t\underset{\mathfrak{Z}_*}{\gtrsim}
2^{\theta \hat kt}(\hat kt)^{-\gamma} \tau^{-1}(\hat kt),
\end{align}
\begin{align}
\label{psi_jt_norm_est1}
\left\|\frac{\nabla^r\psi_{j,t}}{g}\right\|_{L_p(\Omega)}=1, \quad
\|\psi_{j,t}\|_{L_{q,v}(\Omega)}
\underset{\mathfrak{Z}_*}{\gtrsim} 2^{-\theta\left(\frac 1q-\frac
1p\right)\hat kt}(\hat kt)^{-\alpha +\frac 1q+1-\frac 1p}
\rho(\hat kt).
\end{align}
\end{Sta}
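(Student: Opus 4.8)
\emph{Strategy.} Proposition \ref{low_est_sta2} is the $p\ge q$ limiting case, and the point is that the required $L_{q,v}$-norm of each bump exceeds a single‑chunk bump norm $u_{\hat kt}w_{\hat kt}\asymp 2^{-\theta(1/q-1/p)\hat kt}(\hat kt)^{-\alpha}\rho(\hat kt)$ by a factor $(\hat kt)^{1/q+1-1/p}=(\hat kt)^{1/q+1/p'}$. I would obtain this factor from a \emph{tent‑in‑scale} construction on a cone of the tree $\mathcal A$: the condition $\beta_v=\frac{d-\theta}{q}$ together with the limiting balance $\beta-\delta+\theta\left(\frac1q-\frac1p\right)=0$ makes the contribution of each of the $\asymp t$ dyadic shells of such a cone to $\|\cdot\|_{L_{q,v}}$ and to $\|\nabla^r(\cdot)/g\|_{L_p}$ independent of the shell, so summing over the $\asymp t$ shells multiplies the one‑shell estimate by $t^{1/q}$ (in the $L_{q,v}$-norm) and by $t^{-1/p'}$ (in the derivative norm after ramping up and down), and the quotient picks up exactly $t^{1/q+1/p'}$. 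In the non-limiting regime of Proposition \ref{low_est_sta1} the shell contributions to the derivative norm instead form a geometric series, the sum is governed by one endpoint, and there is no logarithmic gain — which is why the two statements differ by a power of the logarithm.

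\emph{Construction.} Fix $\hat k$ a suitable multiple of $\overline s$, and let $\zeta_1,\dots,\zeta_{j_t}$ be the vertices of $\mathcal A$ lying at the tree-depth for which $\mathrm{dist}_{|\cdot|}(x,\Gamma)\asymp 2^{-\hat kt}$ on $\Omega[\zeta_j]$; then $j_t={\rm card}\,{\bf V}^{\mathcal A}_{\bullet}(\eta_{j_*,1})\underset{\mathfrak Z_*}{\asymp}\frac{h(2^{-\overline sj_*})}{h(2^{-\hat kt})}\underset{\mathfrak Z_*}{\asymp}\frac1{h(2^{-\hat kt})}\underset{\mathfrak Z_*}{\asymp}2^{\theta\hat kt}(\hat kt)^{-\gamma}\tau^{-1}(\hat kt)$, which is (\ref{j_t_card_low_est1}). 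For each $j$ let $\mathcal D_j$ be the subtree of $\mathcal A_{\zeta_j}$ spanned by the descendants of $\zeta_j$ at tree-distance $\le t$ from $\zeta_j$, and put $C_j:=\Omega[\mathcal D_j]$; since distinct $\zeta_j$ generate vertex-disjoint subtrees and the $\Omega[\xi]$ form a partition of $\Omega$, the cones $C_j$ pairwise do not overlap. I would then construct $\psi_{j,t}\in C_0^\infty(\R^d)$, supported in $\inter C_j$, vanishing near the coarse end, the fine end and the lateral boundary of $C_j$, and of order $H\cdot\frac{\min(m,t-m)}{t}$ on the shell of $C_j$ at relative depth $m$ (i.e. $\bigcup_{\xi\in{\bf V}^{\mathcal D_j}_m(\zeta_j)}\Omega[\xi]$), with $|\nabla^r\psi_{j,t}|\underset{\mathfrak Z_*}{\asymp}\frac Ht\,2^{r\overline s(\bullet)}$ on that shell, $H>0$ a free height parameter. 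As in \cite{vas_vl_raspr, vas_width_raspr}, this uses that each $\Omega[\xi]$ contains a ball of radius comparable to its scale and that consecutive shells glue into a John-type cone, so a scale-wise partition of unity produces such a tent with the stated gradient bounds.

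\emph{Estimates, normalization, and the hard part.} Using (\ref{def_h1}), (\ref{ghi_g01}), (\ref{beta_v_eq_dtq}), Lemma \ref{sum_lem1}, (\ref{uw_ex}) and the limiting relation, one checks that the shell of $C_j$ at relative depth $m$ has Lebesgue measure $\underset{\mathfrak Z_*}{\asymp}2^{-d\hat kt}2^{(\theta-d)\overline sm}$, whence $\int_{\text{shell }m}v^q\underset{\mathfrak Z_*}{\asymp}2^{-\theta\hat kt}(\hat kt)^{-q\alpha_v}\rho_v^q(\hat kt)\underset{\mathfrak Z_*}{\asymp}w_{\hat kt}^q$ and $\int_{\text{shell }m}|\nabla^r\psi_{j,t}|^p/g^p\underset{\mathfrak Z_*}{\asymp}\big(\tfrac Ht\big)^p 2^{-\theta\hat kt}(\hat kt)^{p\alpha_g}\rho_g^{-p}(\hat kt)\underset{\mathfrak Z_*}{\asymp}\big(\tfrac Ht\big)^p u_{\hat kt}^{-p}$, both independent of $m\in\overline{0,t}$ (this is exactly where $\beta_v=\frac{d-\theta}{q}$ and $\beta-\delta+\theta(\frac1q-\frac1p)=0$ enter). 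Summing over the $\asymp t$ shells gives $\|\psi_{j,t}\|_{L_{q,v}(\Omega)}\underset{\mathfrak Z_*}{\asymp}t^{1/q}Hw_{\hat kt}$ and $\|\nabla^r\psi_{j,t}/g\|_{L_p(\Omega)}\underset{\mathfrak Z_*}{\asymp}t^{-1/p'}Hu_{\hat kt}^{-1}$; choosing $H\underset{\mathfrak Z_*}{\asymp}t^{1/p'}u_{\hat kt}$ and rescaling so that $\|\nabla^r\psi_{j,t}/g\|_{L_p(\Omega)}=1$ yields $\|\psi_{j,t}\|_{L_{q,v}(\Omega)}\underset{\mathfrak Z_*}{\asymp}t^{1/q+1/p'}u_{\hat kt}w_{\hat kt}\underset{\mathfrak Z_*}{\asymp}2^{-\theta(\frac1q-\frac1p)\hat kt}(\hat kt)^{-\alpha+\frac1q+1-\frac1p}\rho(\hat kt)$, which is (\ref{psi_jt_norm_est1}). (The hypothesis $\alpha-1-(1-\gamma)(\frac1q-\frac1p)>0$ is not used in the construction; it is what makes the matching upper bound hold and the quantity in (\ref{psi_jt_norm_est1}) genuinely small.) The main obstacle is the construction of $\psi_{j,t}$ itself: producing one smooth function that is comparably large on every shell of the cone while vanishing on its lateral boundary, with the scale-wise gradient bound $|\nabla^r\psi_{j,t}|\asymp\frac Ht2^{r\overline s(\bullet)}$, requires the geometric regularity of the decomposition $\{\Omega[\xi]\}$ from \cite{vas_vl_raspr} (comparable inscribed balls at each scale, bounded overlap of neighbouring chunks, John-type gluing), and this is the step where one must invoke the machinery of \cite{vas_width_raspr} rather than redo it; the remaining computations are routine bookkeeping with Lemma \ref{sum_lem1}.
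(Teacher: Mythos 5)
Your mechanism and the final arithmetic are the same as the paper's: in the limiting case the $\asymp t$ dyadic scales contribute comparably to both norms, and the quotient gains $t^{1/q+1/p'}$. However, the proof has a genuine gap exactly at the step you yourself call the main obstacle, and the references you invoke do not close it. A smooth $\psi_{j,t}$ that is $\asymp H$ on the middle shells of the cone $C_j$, vanishes on the lateral boundary of $C_j$, and satisfies $|\nabla^r\psi_{j,t}|\underset{\mathfrak Z_*}{\lesssim}\frac Ht\,2^{r(\hat kt+\overline sm)}$ throughout the shell of depth $m$ cannot exist as stated: the chunks $\Omega[\xi]$ at depth $m$ have diameter $\asymp 2^{-\hat kt-\overline sm}$, so near the lateral boundary the function must drop from $\asymp H$ to $0$ across a distance comparable to the local chunk size, forcing derivatives of order $H\,2^{r(\hat kt+\overline sm)}$ (i.e.\ $t$ times larger than your bound) on part of each shell; for a general $h$-set there is no control saying that only a negligible fraction of the depth-$m$ chunks of the cone are adjacent to its exterior, so this region can contaminate the $L_p$-estimate of $\nabla^r\psi_{j,t}/g$. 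In addition, several facts you assert with $\asymp$ about $\mathcal A$ (the vertex count at a given depth from below, hence (\ref{j_t_card_low_est1}), and the shell measures of the cone from below) are only available as upper bounds from the listed properties of the tree; the lower bounds require separate arguments using the John condition and the measure $\mu$.

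The paper resolves precisely these two points by a different, cube-based construction. It sets $f_{\Delta}=\sum_{s=0}^{t}\sum_{i\in J_s}\psi_{\Delta_{s,i}}$, a sum of height-one bumps over \emph{all} $\Gamma$-touching dyadic cubes of scales $\hat k(t+s)$, $0\le s\le t$, inside $\hat Q_\Delta$; the lateral localization thus happens only at the coarsest scale, and the lower bound $|f_\Delta|\gtrsim t$ is claimed only well inside, on $E_l(\Delta)\setminus\hat E_{l+1}(\Delta)$ with $t/2\le l\le t$. There is no pointwise $1/t$ improvement of the derivative: at a point of shell $l$ the finest bump present dominates, giving $|\nabla^r f_\Delta/g|$ of single-scale size, and the gain appears only at the level of norms ($t^{1/p}$ for $\|\nabla^r f_\Delta/g\|_p$ versus $t^{1+1/q}$ for $\|f_\Delta\|_{q,v}$ --- the same net factor as yours). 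The shell measures are supplied by Lemma \ref{mes_e} and (\ref{mes_ell1}) (the John condition enters only there), and the count (\ref{j_t_card_low_est1}) is obtained not from a vertex count of $\mathcal A$ but from $\mu(\Gamma)\asymp 1$, (\ref{c1htmu}) and the Vitali theorem \ref{vitali} applied to the coverings $\{\hat Q_{(\Delta_\nu)_0}\}$. If you want to keep your tree-cone formulation, you would have to redo these ingredients (coarse-scale-only lateral cutoff, a lower bound on the number of surviving chunks per shell), which amounts to reproducing the paper's argument rather than citing \cite{vas_vl_raspr,vas_width_raspr}.
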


\smallskip

First we formulate the Vitali covering theorem \cite[p.
408]{leoni1}).

\begin{trma}
\label{vitali} Denote by $B(x, \, t)$ the open or closed ball of
radius $t$ with respect to some norm on $\R^d$ centered in $x$.
Let $E \subset \R^d$ be a finite union of balls $B(x_i, \, r_i)$,
$1\le i\le l$. Then there exists a subset ${\cal I}\subset \{1, \,
\dots, \, l\}$ such that the balls $\{B(x_i, \, r_i)\}_{i\in {\cal
I}}$ are pairwise non-overlapping and $E\subset \cup_{i\in {\cal
I}} B(x_i, \, 3r_i)$.
\end{trma}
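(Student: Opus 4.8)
I would prove Theorem~\ref{vitali} by the classical greedy selection argument --- repeatedly keep a ball of largest available radius and discard everything it overlaps. Since $E$ is a \emph{finite} union, relabel the balls so that $r_1\ge r_2\ge\dots\ge r_l$, then run through the list once building an index set ${\cal I}$: start from ${\cal I}=\emptyset$, and at step $i=1,\dots,l$ adjoin $i$ to ${\cal I}$ exactly when $B(x_i,r_i)$ does not overlap any of the already chosen balls $B(x_j,r_j)$, $j\in {\cal I}$ (``overlap'' in the Lebesgue-measure sense of the definition preceding the theorem). Since no index is ever deleted, $\{B(x_i,r_i)\}_{i\in {\cal I}}$ is pairwise non-overlapping by construction, so the whole content of the theorem reduces to the covering inclusion $E\subset\bigcup_{i\in {\cal I}}B(x_i,3r_i)$.

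To get that inclusion I would fix an arbitrary index $m\in\{1,\dots,l\}$ and check $B(x_m,r_m)\subset\bigcup_{i\in {\cal I}}B(x_i,3r_i)$. If $m\in {\cal I}$ this is trivial. If $m\notin {\cal I}$, then at step $m$ the ball $B(x_m,r_m)$ overlapped some already chosen ball $B(x_i,r_i)$ with $i<m$, so $i\in {\cal I}$, $r_i\ge r_m$, and the two balls share a common point $y$ (a set of positive measure is nonempty, so this holds for open and for closed balls alike). Then for every $z\in B(x_m,r_m)$ the triangle inequality for the norm $|\cdot|$ gives
$$
|z-x_i|\le|z-x_m|+|x_m-y|+|y-x_i|\le r_m+r_m+r_i\le 3r_i,
$$
the last step using $r_m\le r_i$; hence $B(x_m,r_m)\subset B(x_i,3r_i)$. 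Taking the union over $m=1,\dots,l$ yields $E=\bigcup_{m=1}^l B(x_m,r_m)\subset\bigcup_{i\in {\cal I}}B(x_i,3r_i)$, which is exactly the assertion.

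The argument is entirely elementary and has no genuinely hard step; the only point deserving a moment's care is the interface between the measure-theoretic notion of ``non-overlapping'' and the geometry. With the convention $\nu(A\cap B)=0$ for non-overlapping $A$, $B$, a ball rejected during the selection must meet a previously chosen ball of radius no smaller than its own in a set of \emph{positive} measure, hence in a nonempty set, and it is precisely this nonemptiness --- not any quantitative overlap --- that feeds the dilation estimate $|z-x_i|\le 3r_i$. Everything goes through verbatim for an arbitrary norm on $\R^d$ (only the triangle inequality is used), and the finiteness of $E$ enters solely to let us order the balls by non-increasing radius so that the greedy choice is well defined at each step.
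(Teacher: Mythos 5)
Your proof is correct. The paper does not prove Theorem \ref{vitali} at all---it is simply quoted from Leoni's book \cite{leoni1}---and your greedy selection by non-increasing radius, with ``non-overlapping'' (positive-measure intersection implies a common point) feeding the triangle-inequality dilation estimate, is precisely the classical argument behind the cited result, so there is nothing to reconcile with the paper. One cosmetic remark: when the balls are open, note that the estimates are in fact strict ($|z-x_m|<r_m$, $|x_m-y|<r_m$, $|y-x_i|<r_i$), which is what you need to conclude $z\in B(x_i,\,3r_i)$ in the open case; the non-strict bound $|z-x_i|\le 3r_i$ alone would not quite give membership in an open dilated ball.
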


Let ${\cal K}$ be a family of closed cubes in $\R^d$ with axes
parallel to coordinate axes. Given a cube $K \in {\cal K}$ and
$s\in \Z_+$, we denote by $\Xi _s(K)$ the partition of $K$ into
$2^{sd}$ closed non-overlapping cubes of the same size, and we set
$\Xi(K):=\bigcup_{s\in \Z_+} \Xi _s(K)$.

Given a cube $\Delta\in \Xi\left(\left[-\frac 12, \, \frac
12\right]^d\right)$ such that $\Delta \cap\Gamma \ne \varnothing$,
we define the cubes $Q_\Delta$, $\tilde Q_\Delta$, $\hat Q_\Delta$
and the points $x_\Delta$, $\hat x_\Delta$ as follows.

Let $m\in \N$, $\Delta \in \Xi_m\left(\left[-\frac 12, \, \frac
12\right]^d\right)$, $\Delta\cap \Gamma\ne \varnothing$. We choose
$x_\Delta\in \Delta\cap \Gamma$ and a cube $Q_\Delta$ such that
$\Delta\in \Xi_1(Q_\Delta)$,
\begin{align}
\label{dist_xdel_2m} {\rm dist}_{|\cdot|} (x_\Delta, \, \partial
Q_\Delta)\ge 2^{-m-1}.
\end{align}
Denote by $\hat x_\Delta$ the center of $Q_\Delta$. Then
\begin{align}
\label{h_del_12} Q_\Delta=\hat x_\Delta+2^{-m+1} \cdot\left[-\frac
12, \, \frac 12\right]^d.
\end{align}
We set
\begin{align}
\label{t_del_14} \tilde Q_\Delta=\hat x_\Delta+3\cdot 2^{-m}
\cdot\left[-\frac 12, \, \frac 12\right]^d, \quad \hat
Q_\Delta=\hat x_\Delta+2^{-m+2} \cdot\left[-\frac 12, \, \frac
12\right]^d.
\end{align}

Recall that the norm $|\cdot|$ is defined by $|(x_1, \, \dots, \,
x_d)|=\max _{1\le i\le d}|x_i|$.

Let $\hat k\in \N$ (it will be chosen later). For each $l\in \Z_+$
we set
\begin{align}
\label{el_del_def} \hat E_l(\Delta)=\{x\in \hat Q_\Delta:\; {\rm
dist}_{|\cdot|}(x, \, \Gamma) \le 2^{-m-\hat kl+2}\}, \quad
E_l(\Delta)=\hat E_l(\Delta)\cap Q_\Delta \cap \Omega.
\end{align}
Notice that
\begin{align}
\label{hqd_e0d} \hat Q_\Delta=\hat E_0(\Delta).
\end{align}

Denote by ${\rm mes}\, A$ the Lebesgue measure of the measurable
set $A\subset \R^d$.
\begin{Lem}
\label{mes_e} The following estimate holds:
\begin{align}
\label{mes_el} {\rm mes}\, \hat E_l(\Delta)
\underset{\mathfrak{Z}_*}{\lesssim} 2^{-md-(d-\theta) \hat
kl}\frac{m^\gamma \tau(m)}{(m+\hat kl)^\gamma \tau(m+\hat kl)}.
\end{align}
In addition, there exists $m_0=m_0(\mathfrak{Z}_*)$ such that for
$m\ge m_0$
\begin{align}
\label{mes_el_bel} {\rm mes}\, E_l(\Delta)
\underset{\mathfrak{Z}_*}{\gtrsim} 2^{-md-(d-\theta) \hat
kl}\frac{m^\gamma \tau(m)}{(m+\hat kl)^\gamma \tau(m+\hat kl)}.
\end{align}
\end{Lem}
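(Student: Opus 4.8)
The plan is to estimate the measure of the one-sided "collar" $\hat E_l(\Delta)$ of $\Gamma$ inside $\hat Q_\Delta$ by covering $\Gamma\cap\hat Q_\Delta$ with small balls and using the $h$-set condition \eqref{c1htmu} to control how many such balls are needed, and then transferring the resulting estimate to the measure via a standard Vitali argument. First I would observe that any point $x\in\hat E_l(\Delta)$ lies within distance $2^{-m-\hat kl+2}$ of $\Gamma$, so $\hat E_l(\Delta)$ is contained in the union of the balls $B(y,2^{-m-\hat kl+2})$ over $y\in\Gamma\cap\hat Q_\Delta$ (more precisely, over a $2^{-m-\hat kl+2}$-net of $\Gamma$ near $\hat Q_\Delta$). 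By Theorem \ref{vitali} applied to a finite subcover, there is a pairwise non-overlapping subfamily $\{B(y_i,t)\}_{i\in{\cal I}}$ with $t\asymp 2^{-m-\hat kl}$ such that $\hat E_l(\Delta)$ is covered by $\{B(y_i,3t)\}_{i\in{\cal I}}$; hence ${\rm mes}\,\hat E_l(\Delta)\lesssim ({\rm card}\,{\cal I})\cdot t^d$. The number ${\rm card}\,{\cal I}$ is bounded using \eqref{c1htmu}: the balls $B(y_i,t)$ are disjoint and centered on $\Gamma$ inside a cube of side $\asymp 2^{-m}$, so $\sum_i\mu(B(y_i,t))\le\mu(B(\hat x_\Delta,C2^{-m}))\le c_* h(C2^{-m})$, while $\mu(B(y_i,t))\ge c_*^{-1}h(t)$; therefore ${\rm card}\,{\cal I}\lesssim h(2^{-m})/h(2^{-m-\hat kl})$. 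Plugging in \eqref{def_h1} and using Lemma \ref{sum_lem1} to absorb the slowly varying factor $\tau$ (and noting $|\log 2^{-m-\hat kl}|\asymp m+\hat kl$) yields
$$
{\rm mes}\,\hat E_l(\Delta)\lesssim \frac{h(2^{-m})}{h(2^{-m-\hat kl})}\,2^{-(m+\hat kl)d}\asymp 2^{-md-(d-\theta)\hat kl}\frac{m^\gamma\tau(m)}{(m+\hat kl)^\gamma\tau(m+\hat kl)},
$$
which is \eqref{mes_el}.

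For the lower bound \eqref{mes_el_bel} the plan is reversed: I would produce a family of disjoint balls $B(z_i,c\,2^{-m-\hat kl})$ with centers $z_i\in\Gamma$ lying well inside $Q_\Delta\cap\hat Q_\Delta$, whose number is $\gtrsim h(2^{-m})/h(2^{-m-\hat kl})$, and each contained in $E_l(\Delta)$; summing their measures gives the bound. To build such a family, take a maximal $2^{-m-\hat kl}$-separated set in $\Gamma\cap B(x_\Delta,2^{-m-1})$ (recall \eqref{dist_xdel_2m} puts $x_\Delta$ at distance $\ge 2^{-m-1}$ from $\partial Q_\Delta$, so all these balls stay inside $Q_\Delta$, and they stay inside $\hat Q_\Delta$ and inside the collar $\hat E_l(\Delta)$ by choosing the radius small relative to $2^{-m-\hat kl+2}$). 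Maximality plus \eqref{c1htmu} gives both that the concentric balls of half the radius are disjoint and that their number is $\gtrsim \mu(B(x_\Delta,2^{-m-1}))/h(2^{-m-\hat kl})\gtrsim h(2^{-m})/h(2^{-m-\hat kl})$, using \eqref{h_cond_1} to compare $h(2^{-m-1})$ with $h(2^{-m})$. The one genuinely delicate point — the main obstacle — is the intersection with $\Omega$ in the definition $E_l(\Delta)=\hat E_l(\Delta)\cap Q_\Delta\cap\Omega$: one must check that a definite proportion of each small ball $B(z_i,c\,2^{-m-\hat kl})$ actually lies in $\Omega$, not outside it. This is where the John (flexible cone) condition enters: since $z_i\in\Gamma\subset\partial\Omega$ and $\Omega\in{\bf FC}(a)$, following the John curve $\gamma_{z_i}$ shows there is a ball of radius $\asymp a\,2^{-m-\hat kl}$ inside $\Omega$ at distance $\asymp 2^{-m-\hat kl}$ from $z_i$; choosing $\hat k$ (and then $m_0$) large enough and shrinking the radius by an absolute constant depending only on $a,d$ makes these interior balls disjoint and contained in $E_l(\Delta)$, so that ${\rm mes}\,E_l(\Delta)\gtrsim ({\rm card}\text{ of family})\cdot(2^{-m-\hat kl})^d$, giving \eqref{mes_el_bel} after the same substitution via \eqref{def_h1} and Lemma \ref{sum_lem1}. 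The requirement $m\ge m_0$ is exactly what guarantees we are in the neighborhood of zero where \eqref{def_h1}, \eqref{ghi_g01}, \eqref{h_cond_1} hold and where the John curves from points of $\Gamma\cap\hat Q_\Delta$ behave at the relevant scale.
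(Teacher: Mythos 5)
Your upper-bound argument is essentially the paper's: cover $\hat E_l(\Delta)$ by balls of radius $\asymp 2^{-m-\hat kl}$ centered on $\Gamma$ near $\hat Q_\Delta$, pass to a disjoint subfamily by Theorem \ref{vitali}, count the subfamily by comparing $\mu$-masses via (\ref{c1htmu}) and (\ref{h_cond_1}) (only note that the comparison ball of radius $\asymp 2^{-m}$ must be centered at a point of $\Gamma$, e.g. $x_\Delta$, not at $\hat x_\Delta$, for (\ref{c1htmu}) to apply), and then insert (\ref{def_h1}); this gives (\ref{mes_el}) exactly as in the paper. For the lower bound your overall idea is also the paper's: produce $\gtrsim h(2^{-m})/h(2^{-m-\hat kl})$ locations on $\Gamma$ at scale $2^{-m-\hat kl}$ well inside $Q_\Delta$, and use the ${\bf FC}(a)$ condition to attach to each a piece of $\Omega$ of measure $\gtrsim 2^{-(m+\hat kl)d}$ lying in the collar. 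Two remarks on the execution: Definition \ref{fca} provides curves only for points of $\Omega$, so you cannot literally follow $\gamma_{z_i}$ with $z_i\in\Gamma\subset\partial\Omega$; as in the paper, first pick $x\in\Omega$ with $|x-z_i|\lesssim 2^{-m-\hat kl}$ (possible since $z_i\in\partial\Omega$) and run the curve from $x$, using $m\ge m_0$ to ensure $x_*$ is far away so the curve parameter can be taken $\asymp 2^{-m-\hat kl}$; and take your separated set in a slightly smaller ball than $B(x_\Delta,2^{-m-1})$ so the construction stays in $Q_\Delta$ by (\ref{dist_xdel_2m}).

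The one step that fails as stated is the disjointness claim for the interior balls. Disjointness of the boundary balls $B(z_i,c\,2^{-m-\hat kl})$ does not transfer to the John balls $B_{at}(\gamma_{x_i}(t))$: the curve can drift a full $\asymp 2^{-m-\hat kl}$ from its starting point, so interior balls attached to two adjacent, merely $2^{-m-\hat kl}$-separated centers can land essentially on top of each other, and no shrinking of the radius (nor enlarging $\hat k$ or $m_0$, which only change the relation between the scales $2^{-m}$ and $2^{-m-\hat kl}$, not the geometry at scale $2^{-m-\hat kl}$) restores disjointness. The gap is local and repairable in any of three standard ways: (a) take the net $C_1(a,d)\,2^{-m-\hat kl}$-separated with $C_1$ large compared with the maximal drift plus radii, the count surviving up to a constant by (\ref{h_cond_1}); (b) keep your net but replace disjointness by bounded overlap, since each interior ball stays within $C(a)\,2^{-m-\hat kl}$ of its own $z_i$ and only boundedly many $z_j$ lie that close, so summing their measures still bounds ${\rm mes}\,E_l(\Delta)$ from below up to a constant; or (c) do as the paper does: tile a slightly shrunken $Q_\Delta$ by the dyadic cubes $\Delta'\in\Xi_{m+\hat kl+3}$ meeting $\Gamma$, whose associated cubes $Q_{\Delta'}$ have bounded overlap and automatically lie in $\hat E_l(\Delta)\cap Q_\Delta$, and lower-bound ${\rm mes}(Q_{\Delta'}\cap\Omega)$ by intersecting the John ball with $Q_{\Delta'}$, which sidesteps any disjointness of the interior balls. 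With one of these repairs (and the boundary-point fix above) your plan yields (\ref{mes_el_bel}).
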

\begin{proof}
Let us prove (\ref{mes_el}). Consider the covering of the set
$\hat E_l(\Delta)$ by cubes $x+K$, $x\in \Gamma\cap \hat
Q_\Delta$, $K=\bigl(-2^{-m-\hat kl+3}, \, 2^{-m-\hat kl+3}\bigr)$.
We take a finite subcovering; applying Theorem \ref{vitali} (the
balls are taken with respect to $|\cdot|$), we get a family of
pairwise non-intersecting balls $\{x_i+K\}_{i=1}^N$ such that
$\{x_i+3K\}_{i=1}^N$ is a covering of $\hat E_l(\Delta)$. Since
$\cup_{i=1}^N (x_i+K)$ is contained in a ball $B$ of radius
$\tilde R\underset{\mathfrak{Z}_*}{\asymp} 2^{-m}$, we have
$$
\sum \limits _{i=1}^N \mu(x_i+K)\le \mu(B)
\stackrel{(\ref{c1htmu}), (\ref{h_cond_1})} {\underset
{\mathfrak{Z}_*}{\lesssim}} h(2^{-m});
$$
since $x_i\in \Gamma$, we get $\mu(x_i+K)\stackrel{(\ref{c1htmu}),
(\ref{h_cond_1})} {\underset {\mathfrak{Z}_*}{\asymp}}
h(2^{-m-\hat kl})$ and $N\underset {\mathfrak{Z}_*}{\lesssim}
\frac{h(2^{-m})}{h(2^{-m-\hat kl})}$. Finally,
$$
{\rm mes}\, \hat E_l(\Delta) \le \sum \limits _{i=1}^N {\rm mes}\,
(x_i+3K) \underset{\mathfrak{Z}_*}{\lesssim} 2^{-(m+\hat
kl)d}\frac{h(2^{-m})}{h(2^{-m-\hat kl})}.
$$
It remains to apply (\ref{def_h1}).

Let us prove (\ref{mes_el_bel}). Denote by $Q^*_\Delta$ the
homothetic transform of the cube $Q_\Delta$ with respect to its
center with the coefficient $1-2^{-\hat kl-3}$. We set
$$
\{\Delta_i\}_{i=1}^L=\bigl\{\Delta'\in \Xi_{m+\hat
kl+3}\bigl([-1/2, \, 1/2]^d\bigr):\; \Delta'\subset Q^*_\Delta, \;
\Delta'\cap \Gamma\ne \varnothing\bigr\}.
$$
It can be proved similarly as formula (4.20) in
\cite{vas_vl_raspr} that $L\underset{\mathfrak{Z}_*}{\asymp}
\frac{h(2^{-m})}{h(2^{-m-\hat kl})}$. Since $\Delta_i\cap \Gamma
\ne \varnothing$, it follows from the definition of $\Delta_i$ and
$Q^*_\Delta$ that $\cup_{i=1}^L Q_{\Delta_i} \subset \hat
E_l(\Delta)\cap Q_\Delta$. Finally, for any $j\in \{1, \, \dots,
\, L\}$
$$
{\rm card}\, \{i\in \overline{1, \, L}:\; {\rm mes}\,
(Q_{\Delta_i}\cap Q_{\Delta_j})>0\} \underset{d}{\lesssim} 1.
$$
Therefore, it is sufficient to prove that ${\rm mes}
(Q_{\Delta_i}\cap \Omega) \underset{\mathfrak{Z}_*}{\asymp}
2^{-(m+\hat kl)d}$.

Let $x\in Q_{\Delta_i}\cap \Omega$, $|x-x_{\Delta_i}| \le
2^{-m-\hat kl-5}$. This point exists since $x_{\Delta_i}\in \Gamma
\subset\partial \Omega$ and (\ref{dist_xdel_2m}) holds with
$m+\hat kl+3$ instead of $m$; moreover, ${\rm dist}_{|\cdot|}(x,
\,\partial Q_{\Delta_i}){\underset{d}{\gtrsim}} 2^{-m-\hat kl}$.
Let $x_*$ and $\gamma_x(\cdot):[0, \, T(x)] \rightarrow \Omega$ be
such as in Definition \ref{fca}. There exists
$m_0=m_0(\mathfrak{Z}_*)$ such that $x_*\notin Q_{\Delta_i}$ for
$m\ge m_0$. Let $\gamma_x(t_*)\in
\partial Q_{\Delta_i}$. Then $t_*\underset{d}{\gtrsim} 2^{-m-\hat kl}$. By Definition
$\ref{fca}$, the ball $B_{at_*}(\gamma_x(t_*))$ is contained in
$\Omega$. It remains to observe that ${\rm mes}\,
\bigl(B_{at_*}(\gamma_x(t_*))\cap Q_{\Delta_i}\bigr)
\underset{\mathfrak{Z}_*}{\gtrsim} 2^{-(m+\hat kl)d}$.
\end{proof}

\begin{Rem}
\label{mes_el_rem} From (\ref{mes_el}) it follows that ${\rm
mes}\, (\hat Q_\Delta\cap \Gamma)=0$.
\end{Rem}

Suppose that $m\ge m_0(\mathfrak{Z}_*)$.

Choose $\hat k=\hat k(\mathfrak{Z}_*)$ such that for any $l\in
\Z_+$
\begin{align}
\label{mes_ell1} {\rm mes}\, \left(E_l(\Delta)\backslash \hat
E_{l+1}(\Delta)\right) \underset{\mathfrak{Z}_*}{\asymp}
2^{-md-(d-\theta)\hat kl}\frac{m^\gamma \tau(m)}{(m+\hat
kl)^\gamma \tau(m+\hat kl)}
\end{align}
(it is possible by (\ref{rho_yy1}), (\ref{mes_el}) and
(\ref{mes_el_bel})).

Let $\psi \in C_0^\infty(\R^d)$, ${\rm supp}\, \psi\subset
\left[-\frac 12, \, \frac 12\right]^d$, $\psi|_{\left[-\frac 38,
\, \frac 38\right]^d}=1$, $\psi(x)\in [0, \, 1]$ for any $x\in
\R^d$. We set
\begin{align}
\label{psi_del_x_def} \psi_{\Delta}(x)=\psi(2^{m-2}(x-\hat
x_\Delta)).
\end{align}
Then
\begin{align}
\label{supp_psi_del} {\rm supp}\, \psi_{\Delta} \subset \hat
Q_\Delta, \quad \psi_\Delta|_{\tilde Q_\Delta}=1,
\end{align}
\begin{align}
\label{pointw_est} \left|\frac{\nabla^r \psi_\Delta(x)}
{g(x)}\right| \stackrel{(\ref{gx_vrph}), (\ref{ghi_g01}),
(\ref{el_del_def})}{\underset{\mathfrak{Z}_*}{\lesssim}}
2^{-\beta_g(m+\hat kl)} (m+ \hat kl)^{\alpha_g} \rho_g^{-1}(m+\hat
kl) \cdot 2^{rm}, \quad x\in \hat E_l(\Delta) \backslash \hat
E_{l+1}(\Delta).
\end{align}

We set $c_{\Delta}=\left\|\frac{\nabla^r \psi_{
\Delta}}{g}\right\|_{L_p(\hat Q_\Delta)}^{-1}>0$.

\begin{Lem}
\label{c_del} The following estimates hold:
\begin{align}
\label{c_del_1} c_{\Delta} \underset{\mathfrak{Z}_*}{\gtrsim}
2^{\left(\beta_g-r+\frac dp\right)m}m^{-\alpha_g}\rho_g(m), \quad
c_{\Delta} \|\psi_{\Delta}\|_{L_{q,v}(\Omega)}
\underset{\mathfrak{Z}_*}{\gtrsim} 2^{(\beta-\delta)m}
m^{-\alpha+\frac 1q} \rho(m).
\end{align}
\end{Lem}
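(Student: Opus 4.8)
The plan is to compute both sides by slicing the cube $\hat Q_\Delta$ along the dyadic annuli $\hat E_l(\Delta)\setminus\hat E_{l+1}(\Delta)$, $l\ge 0$, on which $\dist_{|\cdot|}(x,\Gamma)\asymp 2^{-m-\hat kl}$ and $g$, $v$ are essentially constant by (\ref{h_cond_1}). First I would estimate $\|\nabla^r\psi_\Delta/g\|_{L_p(\hat Q_\Delta)}$ from above. Using the pointwise bound (\ref{pointw_est}) together with (\ref{mes_el}) (which controls $\mathrm{mes}\,(\hat E_l(\Delta)\setminus\hat E_{l+1}(\Delta))\le \mathrm{mes}\,\hat E_l(\Delta)$), the $L_p$-norm over the $l$-th annulus is at most
$$
2^{-\beta_g(m+\hat kl)}(m+\hat kl)^{\alpha_g}\rho_g^{-1}(m+\hat kl)\cdot 2^{rm}\cdot\Big(2^{-md-(d-\theta)\hat kl}\tfrac{m^\gamma\tau(m)}{(m+\hat kl)^\gamma\tau(m+\hat kl)}\Big)^{1/p}.
$$
Summing the $p$-th powers over $l\ge 0$: the geometric factor in $l$ is $2^{(\beta_g p-(d-\theta))\hat kl}$ up to logarithmic corrections handled by Lemma \ref{sum_lem1}. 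Here I expect to need that the series converges, which should follow from $\beta_g-\frac{d-\theta}{p}<r-\frac{d}{p}$, i.e. from $\delta>0$ once one substitutes $\beta_v=\frac{d-\theta}{q}$ and recalls $\delta=r+\frac dq-\frac dp$; in fact $\beta_g p-(d-\theta)<0$ is equivalent (after dividing by $p$ and using $\beta_g=\beta-\beta_v=\beta-\frac{d-\theta}{q}$) to $\beta-\delta+\theta(\frac1q-\frac1p)<0$ or the corresponding limiting equality, which is exactly the hypothesis of Corollary \ref{cor_pgeq} / \ref{cor_plq}. So the $l=0$ term dominates and gives
$$
\Big\|\tfrac{\nabla^r\psi_\Delta}{g}\Big\|_{L_p(\hat Q_\Delta)}\underset{\mathfrak{Z}_*}{\lesssim}2^{-\beta_g m}m^{\alpha_g}\rho_g^{-1}(m)\cdot 2^{rm}\cdot 2^{-md/p},
$$
whence the first inequality in (\ref{c_del_1}) by taking reciprocals.

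Next I would bound $\|\psi_\Delta\|_{L_{q,v}(\Omega)}=\|\psi_\Delta v\|_{L_q(\Omega)}$ from below. Since $\psi_\Delta\equiv 1$ on $\tilde Q_\Delta\supset Q_\Delta$ and $\psi_\Delta\ge 0$, restricting the integral to $E_l(\Delta)\setminus\hat E_{l+1}(\Delta)\subset Q_\Delta\cap\Omega$ for a single well-chosen $l$ (say $l=0$, using $E_0(\Delta)\setminus\hat E_1(\Delta)$ and (\ref{mes_ell1})) gives
$$
\|\psi_\Delta v\|_{L_q(\Omega)}^q\ge\int_{E_0(\Delta)\setminus\hat E_1(\Delta)}v(x)^q\,dx\underset{\mathfrak{Z}_*}{\gtrsim}\big(2^{\beta_v m}m^{\alpha_v}\rho_v^{-1}(m)\big)^{q}\cdot 2^{-md},
$$
using $v(x)=\varphi_v(\dist_{|\cdot|}(x,\Gamma))\asymp 2^{\beta_v m}m^{\alpha_v}\rho_v^{-1}(m)$ on that annulus via (\ref{ghi_g01}), (\ref{h_cond_1}) and $\mathrm{mes}\,(E_0(\Delta)\setminus\hat E_1(\Delta))\asymp 2^{-md}$ from (\ref{mes_ell1}) with $l=0$. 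Hence $\|\psi_\Delta\|_{L_{q,v}(\Omega)}\gtrsim 2^{\beta_v m}m^{\alpha_v}\rho_v^{-1}(m)\cdot 2^{-md/q}$.

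Finally I would multiply the two lower bounds: using $\beta=\beta_g+\beta_v$, $\alpha=\alpha_g+\alpha_v$, $\rho=\rho_g\rho_v$, $\beta_v=\frac{d-\theta}{q}$ and $\delta=r+\frac dq-\frac dp$, the exponents of $2$ combine to $\beta_g-r+\frac dp+\beta_v-\frac dq=\beta-\delta$ (note $-r+\frac dp-\frac dq=-\delta$), the powers of $m$ to $-\alpha_g-\alpha_v+\frac1q=-\alpha+\frac1q$, and the slowly varying factors to $\rho_g(m)\cdot\rho_v^{-1}(m)$ — wait, here one must be careful: the first bound carries $\rho_g(m)$ while the second carries $\rho_v^{-1}(m)$, so I instead bound $c_\Delta\gtrsim 2^{(\beta_g-r+\frac dp)m}m^{-\alpha_g}\rho_g(m)$ and multiply by $\|\psi_\Delta\|_{L_{q,v}}$; the product of the $\rho$'s must come out to $\rho(m)=\rho_g(m)\rho_v(m)$, which forces the reciprocal in the $c_\Delta$-bound to pair with $\rho_v(m)$ from the norm estimate — so in the norm lower bound I should keep $v\asymp 2^{\beta_v m}m^{\alpha_v}\rho_v(m)$ (not $\rho_v^{-1}$), re-reading (\ref{ghi_g01}) as $\varphi_v(t)=t^{-\beta_v}|\log t|^{-\alpha_v}\rho_v(|\log t|)$ so that $v(x)\asymp 2^{\beta_v m}m^{-\alpha_v}\rho_v(m)$; correspondingly $\alpha_v$ enters with a minus sign and the final power of $m$ is $-\alpha_g-\alpha_v+\frac1q=-\alpha+\frac1q$, with slowly varying factor $\rho_g(m)\rho_v(m)=\rho(m)$. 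This yields the second inequality in (\ref{c_del_1}). The main obstacle is the first bookkeeping step — verifying that the $l$-series for the $L_p$-norm of $\nabla^r\psi_\Delta/g$ converges and is dominated by its $l=0$ term under precisely the hypotheses on $\beta-\delta+\theta(\frac1q-\frac1p)$; once the slowly varying factors are absorbed by Lemma \ref{sum_lem1}, the rest is direct substitution.
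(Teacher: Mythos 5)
Your overall strategy (slice $\hat Q_\Delta$ into the annuli $\hat E_l(\Delta)\setminus\hat E_{l+1}(\Delta)$, bound the $L_p$-norm of $\nabla^r\psi_\Delta/g$ from above and the $L_{q,v}$-norm of $\psi_\Delta$ from below) is the same as the paper's, but both halves of your execution have a genuine flaw. In the first half, the $l$-dependence of the $p$-th power of the summand is $2^{-(p\beta_g+(d-\theta))\hat k l}$ (up to slowly varying factors), not $2^{(p\beta_g-(d-\theta))\hat k l}$, so domination by the $l=0$ term requires $\beta_g+\frac{d-\theta}{p}>0$, which is a \emph{lower} bound on $\beta_g$. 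Your proposed sources for convergence --- $\delta>0$ and the hypothesis $\beta-\delta+\theta\bigl(\frac1q-\frac1p\bigr)\le 0$ --- are upper bounds on $\beta_g$ (the latter is exactly $\beta_g\le r-\frac{d-\theta}{p}$) and cannot prevent $\beta_g$ from being very negative. The paper obtains the needed inequality (\ref{bg_dp}) from the standing assumption $\frac{\delta-\beta}{\theta}\le\frac{\delta}{d}$ of this section together with $\theta<d$ and $\beta_v=\frac{d-\theta}{q}$; that hypothesis never appears in your argument.

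The second half has a more serious gap: by restricting the lower bound for $\|\psi_\Delta v\|_{L_q}$ to the single layer $E_0(\Delta)\setminus\hat E_1(\Delta)$ you get at best $2^{(\beta_v-\frac dq)m}m^{-\alpha_v}\rho_v(m)$, and then the product with the $c_\Delta$-bound gives exponent $m^{-\alpha}$, not the asserted $m^{-\alpha+\frac1q}$; your final bookkeeping simply inserts the $\frac1q$ without a source. In the paper the factor $m^{1/q}$ comes precisely from summing over \emph{all} layers $l\in\Z_+$: since $\beta_v q=d-\theta$, the exponential factors cancel and, using (\ref{mes_ell1}) and $\alpha_v>\frac{1-\gamma}{q}$, the critical series $\sum_l (m+\hat kl)^{-\alpha_vq-\gamma}\rho_v^q(m+\hat kl)\tau^{-1}(m+\hat kl)$ is $\asymp m^{1-\alpha_vq-\gamma}\rho_v^q(m)\tau^{-1}(m)$, i.e.\ a factor $m$ larger than its first term. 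This extra logarithm is exactly what distinguishes the limiting case treated here (cf.\ Remark \ref{rem_nonweight}), so a one-layer argument cannot prove the second inequality in (\ref{c_del_1}).
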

\begin{proof}
We estimate the value $\left\| \frac{\nabla^r \psi_{
\Delta}}{g}\right\|_{L_p(\hat Q_\Delta)}$ from above. First we
notice that from the conditions $\frac{\delta-\beta}{\theta}\le
\frac{\delta}{d}$, $\theta<d$ and
$\beta_v\stackrel{(\ref{beta_v_eq_dtq})}{=}\frac{d-\theta}{q}$ it
follows that
\begin{align}
\label{bg_dp} \beta_g+\frac{d-\theta}{p}>0.
\end{align}
Hence, by Remark \ref{mes_el_rem},
$$
\left\| \frac{\nabla^r \psi_{\Delta}}{g}\right\| _{L_p(\hat
Q_\Delta)}^p\stackrel{(\ref{hqd_e0d})}{=}\sum \limits _{l\in \Z_+}
\left\| \frac{\nabla^r \psi_{\Delta}}{g}\right\| ^p _{L_p(\hat
E_l(\Delta)\backslash \hat E_{l+1}(\Delta))} \stackrel{
(\ref{mes_el}),
(\ref{pointw_est})}{\underset{\mathfrak{Z}_*}{\lesssim}}
$$
$$
\lesssim\sum \limits _{l\in \Z_+} 2^{-p\beta_g (m+\hat k l)}
(m+\hat k l)^{p\alpha_g} \rho_g^{-p}(m+\hat k l) \cdot
2^{prm}\cdot 2^{-dm-(d-\theta)\hat k l} \frac{m^\gamma \tau(m)}
{(m+\hat kl)^\gamma \tau(m+\hat k l)}
\stackrel{(\ref{bg_dp})}{\underset{\mathfrak{Z}_*}{\asymp}}
$$
$$
\asymp 2^{p\left(-\beta_g+r-\frac dp\right)m} m^{p\alpha_g}\rho
_g^{-p}(m).
$$
This implies the first inequality in (\ref{c_del_1}). Let us prove
the second inequality. Taking into account that
$\psi_{\Delta}|_{Q_\Delta}\stackrel{(\ref{supp_psi_del})}{=}1$ and
$\beta_v=\frac{d-\theta}{q}$, we get
$$
\|\psi_{\Delta}\|^q_{L_{q,v}(\Omega)} \ge \sum \limits _{l\in
\Z_+} \|v\psi_{\Delta}\|^q_{L_q(E_l(\Delta)\backslash \hat
E_{l+1}(\Delta))} \stackrel{(\ref{gx_vrph}),(\ref{ghi_g01}),
(\ref{el_del_def}),(\ref{mes_ell1})}{\underset{\mathfrak{Z}_*}{\gtrsim}}
$$
$$
\gtrsim \sum \limits _{l\in \Z_+} 2^{\beta_vq(m+\hat k l)} (m+\hat
k l)^{-\alpha_v q} \rho_v^q(m+\hat k l) \cdot
2^{-md-(d-\theta)\hat k l}\frac{m^\gamma \tau(m)} {(m+\hat k
l)^\gamma \tau(m+\hat k l)}
\stackrel{(\ref{beta_v_eq_dtq})}{\underset{\mathfrak{Z}_*}{\gtrsim}}
$$
$$
\gtrsim 2^{(\beta_vq-d)m}m^{-q\alpha_v+1} \rho_v^q(m).
$$
It remains to apply the first inequality in (\ref{c_del_1}).
\end{proof}

\renewcommand{\proofname}{\bf Proof of Proposition \ref{low_est_sta1}}

\begin{proof}
Let
\begin{align}
\label{delta_nu_def} \{\Delta_\nu\}_{\nu\in {\cal
N}}=\left\{\Delta\in \Xi_{\hat kt}\left(\left[-\frac 12, \, \frac
12\right]^d\right):\; \Delta\cap \Gamma \ne\varnothing\right\}.
\end{align}
Then $\{\hat Q_{\Delta_\nu}\}_{\nu\in {\cal N}}$ is a covering of
$\Gamma$. Denote by $Q^*_{\Delta_\nu}$ the homothetic transform of
$\hat Q_{\Delta_\nu}$ with respect to its center with coefficient
3. Applying Theorem \ref{vitali}, we get that there exists a
subset ${\cal N}'\subset {\cal N}$ such that $\{\hat
Q_{\Delta_\nu}\}_{\nu\in {\cal N}'}$ are pairwise non-overlapping
and $\{Q^*_{\Delta_\nu}\}_{\nu\in {\cal N}'}$ is a covering of
$\Gamma$. We claim that
\begin{align}
\label{crd_n_pr} {\rm card}\, {\cal N}'
\underset{\mathfrak{Z}_*}{\gtrsim} 2^{\theta \hat kt} (\hat
kt)^{-\gamma}\tau^{-1}(\hat kt).
\end{align}
Indeed,
$$
{\rm card}\, {\cal N}'\cdot 2^{-\theta \hat kt} (\hat kt)
^{\gamma}\tau(\hat kt)\stackrel{(\ref{def_h1})}{=}{\rm card}\,
{\cal N}' \cdot h(2^{-\hat kt})
\stackrel{(\ref{c1htmu}),(\ref{h_cond_1})} {\underset
{\mathfrak{Z}_*}{\asymp}} \sum \limits _{\nu\in {\cal N}'} \mu
(Q_{\Delta_\nu}^*) \ge \mu(\Gamma)
\underset{\mathfrak{Z}_*}{\asymp} 1.
$$

We take $\{c_{\Delta_\nu}\psi_{\Delta_\nu}\}_{\nu\in {\cal N}'}$
as the desired function set. It remains to apply Lemma \ref{c_del}
with $m=\hat kt$ and (\ref{crd_n_pr}).
\end{proof}

\renewcommand{\proofname}{\bf Proof}

Let us prove Proposition \ref{low_est_sta2}. Since
$\beta-\delta+\theta\left(\frac 1q-\frac 1p\right)=0$ and
$\beta_v=\frac{d-\theta}{q}$, then
\begin{align}
\label{beta_g_eq} \beta_g=r-\frac dp+\frac{\theta}{p}.
\end{align}

Let $t\in \N$ be sufficiently large, and let $\Delta \in \Xi_{\hat
kt}\left(\left[-1/2, \, 1/2\right]^d\right)$, $\Delta \cap
\Gamma\ne \varnothing$. For each $s\in \Z_+$ we set
\begin{align}
\label{def_del_s_i} \{\Delta_{s,i}\}_{i\in J_s}=\bigl\{\Delta'\in
\Xi_{\hat k(t+s)}\bigl([-1/2, \, 1/2]^d\bigr):\; \Delta'\subset
\hat Q_\Delta, \;\; \Delta'\cap \Gamma \ne \varnothing\bigr\}.
\end{align}

Let
$$
f_{\Delta}(x)=\sum \limits _{s=0}^t \sum \limits _{i\in J_s}
\psi_{\Delta_{s,i}}(x),
$$
where functions $\psi_{\Delta_{s,i}}$ are defined by formula
similar to (\ref{psi_del_x_def}).

There are a number $t_0=t_0(\mathfrak{Z}_*)$ and a cube
$\Delta_0\in \Xi_{\hat k (t-t_0)}\left(\left[-1/2, \,
1/2\right]^d\right)$ such that $\Delta\subset \Delta_0$,
$\Gamma\cap \Delta_0\ne \varnothing$ and ${\rm supp}\,
f_{\Delta}\subset \hat Q_{\Delta_0}$.

Let $l\in \Z_+$, $x\in \hat E_l(\Delta_0)\backslash \hat
E_{l+1}(\Delta_0)$ (see (\ref{el_del_def}) with $m=\hat
k(t-t_0)$). Then ${\rm dist}_{|\cdot|}\, (x, \, \Gamma)
\underset{\mathfrak{Z}_*}{\asymp} 2^{-\hat k(t+l)}$. We estimate
$\left|\frac{\nabla^r f_{\Delta}(x)}{g(x)}\right|$ from above. If
$x\in {\rm supp}\, \psi_{{\Delta}_{s,i}}$ for some $i\in J_s$,
then
$$
\left|\frac{\nabla^r \psi_{\Delta_{s,i}}(x)}{g(x)}\right|
\stackrel{(\ref{gx_vrph}),(\ref{ghi_g01}),(\ref{el_del_def}),
(\ref{def_del_s_i})}{\underset{\mathfrak{Z}_*}{\lesssim}}
2^{-\beta_g\hat k(t+l)} (\hat k(t+l))^{\alpha_g} \rho_g^{-1}(\hat
k(t+l)) \cdot 2^{r\hat k(t+s)}.
$$
Moreover, by (\ref{def_del_s_i}) we get $s\le l+s_0$ with
$s_0=s_0(\mathfrak{Z}_*)$. Since ${\rm supp}\, \psi_{\Delta_{s,i}}
\stackrel{(\ref{supp_psi_del})}{\subset} \hat Q_{\Delta_{s,i}}$,
by the definition of $\hat Q_{\Delta_{s,i}}$ it follows that for
any $x\in \hat Q_{\Delta_0}$ the inequality ${\rm card}\, \{i\in
J_s:\; x\in {\rm supp}\, \psi_{\Delta_{s,i}}\}
\underset{\mathfrak{Z}_*}{\lesssim} 1$ holds. Hence, for $l\le
t-s_0$
$$
\left|\frac{\nabla^r f_{\Delta}(x)}{g(x)}\right|
\underset{\mathfrak{Z}_*}{\lesssim} \sum \limits _{s=0}^{l+s_0}
2^{-\beta_g\hat k(t+l)} (\hat k(t+l))^{\alpha_g} \rho_g^{-1}(\hat
k(t+l)) \cdot 2^{r\hat k(t+s)}
\stackrel{(\ref{rho_yy1})}{\underset {\mathfrak{Z}_*}{\lesssim}}
$$
$$
\lesssim 2^{(r-\beta_g) \hat k(t+l)}(\hat kt)^{\alpha_g}
\rho_g^{-1}(\hat kt),
$$
and for $l>t-s_0$
$$
\left|\frac{\nabla^r f_{\Delta}(x)}{g(x)}\right|
\underset{\mathfrak{Z}_*}{\lesssim}  \sum \limits _{s=0}^t
2^{-\beta_g\hat k(t+l)} (\hat k(t+l))^{\alpha_g} \rho_g^{-1}(\hat
k(t+l)) \cdot 2^{r\hat k(t+s)}
\stackrel{(\ref{rho_yy1})}{\underset {\mathfrak{Z}_*}{\lesssim}}
$$
$$
\lesssim 2^{-\beta_g\hat k(t+l)} (\hat k(t+l))^{\alpha_g}
\rho_g^{-1}(\hat k(t+l))\cdot 2^{2r\hat k t}.
$$
This yields that
$$
\left\|\frac{\nabla^r f_{\Delta}}{g}\right\|^p_{L_p(\Omega)}
\stackrel{(\ref{hqd_e0d})}{=}\sum \limits _{l=0}^\infty
\left\|\frac{\nabla^r f_{\Delta}}{g}\right\|^p_{L_p(\hat
E_l(\Delta_0)\backslash \hat E_{l+1}(\Delta_0))}
\stackrel{(\ref{mes_el})}{\underset{\mathfrak{Z}_*}{\lesssim}}
$$
$$
\lesssim \sum \limits _{l=0}^{t-s_0} 2^{p(r-\beta_g) \hat
k(t+l)}(\hat kt)^{p\alpha_g} \rho_g^{-p}(\hat kt) \cdot 2^{-\hat
ktd-(d-\theta) \hat kl}\frac{(\hat kt)^\gamma \tau(\hat kt)}{(\hat
k(t+l))^\gamma \tau(\hat k(t+l))}+
$$
$$
+ \sum \limits _{l=t-s_0+1}^\infty 2^{-p\beta_g\hat k(t+l)} (\hat
k(t+l))^{p\alpha_g} \rho_g^{-p}(\hat k(t+l))\cdot 2^{2r\hat k tp}
\cdot 2^{-\hat ktd-(d-\theta) \hat kl}\frac{(\hat kt)^\gamma
\tau(\hat kt)}{(\hat k(t+l))^\gamma \tau(\hat k(t+l))}
\stackrel{(\ref{beta_g_eq})}{\underset{\mathfrak{Z}_*}{\lesssim}}
$$
$$
\lesssim 2^{-\hat kt\theta} (\hat kt) ^{\alpha_gp+1}
\rho_g^{-p}(\hat kt).
$$
Thus,
\begin{align}
\label{n_fg} \left\|\frac{\nabla^r f_{\Delta}}{g}
\right\|_{L_p(\Omega)} \underset{\mathfrak{Z}_*}{\lesssim}
2^{-\frac{\hat kt\theta}{p}} (\hat kt)^{\alpha_g+\frac
1p}\rho_g^{-1}(\hat kt).
\end{align}

Let us estimate $\|f_{\Delta}\|_{L_{q,v}(\Omega)}$ from below. Let
$x\in E_l(\Delta)\backslash \hat E_{l+1}(\Delta)$. Then ${\rm
dist}_{|\cdot|} (x, \, \Gamma)
\stackrel{(\ref{el_del_def})}{\underset{\mathfrak{Z}_*}{\asymp}}
2^{-\hat k(t+l)}$ and there exists $l_0=l_0(\mathfrak{Z}_*)$ such
that for $0\le s\le l-l_0$ there exists $i_s\in J_s$ such that
$x\in \tilde Q _{\Delta_{s,i_s}}$. (Indeed, since $x\in Q_\Delta$
by (\ref{el_del_def}), there exists a point $y\in \Gamma\cap \hat
Q_\Delta$ such that $|x-y|\underset{\mathfrak{Z}_*}{\asymp}
2^{-\hat k(t+l)}$. We choose a cube $\Delta_{s,i_s}$ that contains
the point $y$. By the definition of the cube $Q_{\Delta_{s,i_s}}$,
we have $\hat x_{\Delta_{s,i_s}}\in \Delta_{s,i_s}$; hence,
$|y-\hat x_{\Delta_{s,i_s}}|\stackrel{(\ref{def_del_s_i})} {\le}
2^{-\hat k(t+s)}$. Therefore, $|x-\hat x_{\Delta_{s,i_s}}| \le
|x-y|+|y-\hat x_{\Delta_{s,i_s}}|\le c(\mathfrak{Z}_*)2^{-\hat
k(t+l)}+2^{-\hat k(t+s)}$ for some $c(\mathfrak{Z}_*)>0$. It
remains to apply (\ref{t_del_14}), (\ref{def_del_s_i}) and the
inequality $s\le l-l_0$.) Hence, for $\frac{t}{2}\le l\le t$ we
have $|f_{\Delta}(x)| \stackrel{(\ref{supp_psi_del})}
{\underset{\mathfrak{Z}_*}{\gtrsim}} t$. Consequently,
$$
\|f_{\Delta}\|^q_{L_{q,v}(\Omega)} \ge \sum \limits _{t/2\le l\le
t}\|f_{\Delta}\|^q_{L_{q,v}(E_l(\Delta)\backslash \hat
E_{l+1}(\Delta))} \stackrel{(\ref{gx_vrph}),(\ref{ghi_g01}),
(\ref{el_del_def}),(\ref{mes_ell1})}{\underset{\mathfrak{Z}_*}{\gtrsim}}
$$
$$
\gtrsim \sum \limits _{t/2\le l\le t} t^q\cdot 2^{\beta_vq\hat
k(t+l)}(\hat k(t+l))^{-\alpha_v q} \rho_v^q(\hat k(t+l))\cdot
2^{-\hat ktd-(d-\theta) \hat k l}\frac{(\hat kt)^\gamma \tau(\hat
kt)}{(\hat k(t+l))^\gamma \tau(\hat k(t+l))}
\stackrel{(\ref{beta_v_eq_dtq})}{\underset{\mathfrak{Z}_*}{\gtrsim}}
$$
$$
\gtrsim 2^{-\theta\hat kt} (\hat kt)^{-\alpha_vq+q+1}
\rho_v^q(\hat kt);
$$
i.e.,
\begin{align}
\label{f_del_q} \|f_{\Delta}\|_{L_{q,v}(\Omega)}
\underset{\mathfrak{Z}_*}{\gtrsim} 2^{-\frac{\theta\hat kt}{q}}
(\hat kt)^{-\alpha_v+1+\frac 1q} \rho_v(\hat kt).
\end{align}

\renewcommand{\proofname}{\bf Proof of Proposition \ref{low_est_sta2}}
\begin{proof}
Let the set of cubes $\{\Delta_\nu\}_{\nu\in {\cal N}}$ be defined
by formula (\ref{delta_nu_def}), and let
$F_{\Delta_\nu}=c_{\Delta_\nu}f_{\Delta_\nu}$, with
$c_{\Delta_\nu}$ such that $\left\|\frac{\nabla^r
F_{\Delta_\nu}}{g}\right\|_{L_p(\Omega)}=1$. From (\ref{n_fg}) and
(\ref{f_del_q}) it follows that
$$
\|F_{\Delta_\nu}\|_{L_{q,v}(\Omega)}
\underset{\mathfrak{Z}_*}{\gtrsim} 2^{-\theta\left(\frac 1q-\frac
1p\right)\hat kt}(\hat kt)^{-\alpha +\frac 1q+1-\frac 1p}
\rho(\hat kt).
$$
Further, ${\rm supp}\, F_{\Delta_\nu}={\rm supp}\,
f_{\Delta_\nu}\subset \hat Q_{(\Delta_\nu)_0}$ and ${\rm diam}\,
\hat Q_{(\Delta_\nu)_0} \underset{\mathfrak{Z}_*}{\asymp} 2^{-\hat
kt}$. We apply Theorem \ref{vitali} to the covering $\{\hat
Q_{(\Delta_\nu)_0}\}_{\nu\in {\cal N}}$ of the set $\Gamma$ and
argue similarly as in the proof of Proposition \ref{low_est_sta1}.
\end{proof}

\begin{Rem}
Let $\beta_v=\frac{d-\theta}{q}$, $\beta-\delta+\theta\left(\frac
1q-\frac 1p\right)_+=0$. In addition, let $\alpha<\frac 1q$ in the
case $1<p<q<\infty$, and let $\alpha<1+(1-\gamma)\left(\frac
1q-\frac 1p\right)$ in the case $p\ge q$. Then Propositions
\ref{low_est_sta1} and \ref{low_est_sta2} hold; it implies that
$\vartheta_n(W^r_{p,g}(\Omega), \, L_{q,v}(\Omega))=\infty$ for
any $n\in \Z_+$. In particular, if we take $\vartheta_n=d_n$, then
we get that the deviation of $W^r_{p,g}(\Omega)$ from any
finite-dimensional subspace is infinite.
\end{Rem}

\renewcommand{\proofname}{\bf Proof}
\begin{Biblio}
\bibitem{ait_kus1} M.S. Aitenova, L.K. Kusainova, ``On the asymptotics of the distribution of approximation
numbers of embeddings of weighted Sobolev classes. I'', {\it Mat.
Zh.}, {\bf 2}:1 (2002), 3--9.

\bibitem{ait_kus2} M.S. Aitenova, L.K. Kusainova, ``On the asymptotics of the distribution of approximation
numbers of embeddings of weighted Sobolev classes. II'', {\it Mat.
Zh.}, {\bf 2}:2 (2002), 7--14.

\bibitem{and_hein} K.\,F.~Andersen and H.\,P.~Heinig, Weighted norm inequalities for
certain integral operators, SIAM J. Math. Anal. \textbf{14}, 
834--844 (1983).

\bibitem{bennett_g} G.~Bennett, Some elementary inequalities.
III, Quart. J. Math. Oxford Ser. (2) \textbf{42}, 
149–174 (1991).

\bibitem{besov_peak_width} O.V. Besov, ``Kolmogorov widths
of Sobolev classes on an irregular domain'', {\it Proc. Steklov
Inst. Math.}, {\bf 280} (2013), 34-45.

\bibitem{besov_il1} O.V. Besov, V.P. Il'in, S.M. Nikol'skii,
{\it Integral representations of functions, and imbedding
theorems}. ``Nauka'', Moscow, 1996. [Winston, Washington DC;
Wiley, New York, 1979].

\bibitem{birm} M.Sh. Birman and M.Z. Solomyak, ``Piecewise polynomial
approximations of functions of classes $W^\alpha_p$'', {\it Mat.
Sb.} {\bf 73}:3 (1967), 331--355 [Russian]; transl. in {\it Math
USSR Sb.}, {\bf 2}:3 (1967), 295--317.

\bibitem{boy_1} I.V. Boykov, ``Approximation of some classes
of functions by local splines'', {\it Comput. Math. Math. Phys.},
{\bf 38}:1 (1998), 21--29.

\bibitem{boy_2} I.V. Boykov, ``Optimal approximation and Kolmogorov
widths estimates for certain singular classes related to equations
of mathematical physics'', arXiv:1303.0416v1.

\bibitem{m_bricchi1} M. Bricchi, ``Existence and properties of
$h$-sets'', {\it Georgian Mathematical Journal}, {\bf 9}:1 (2002),
13–-32.

\bibitem{m_christ} M. Christ, ``A $T(b)$ theorem with remarks
on analytic capacity and the Cauchy integral'', {\it Colloq.
Math.} {\bf 60/61}:2 (1990), 601--628.

\bibitem{de_vore_sharpley} R.A. DeVore, R.C. Sharpley,
S.D. Riemenschneider, ``$n$-widths for $C^\alpha_p$ spaces'', {\it
Anniversary volume on approximation theory and functional analysis
(Oberwolfach, 1983)}, 213--222, Internat. Schriftenreihe Numer.
Math., {\bf 65}, Birkh\"{a}user, Basel, 1984.

\bibitem{edm_ev_book} D.E. Edmunds, W.D. Evans, {\it Hardy Operators, Function Spaces and Embeddings}.
Springer-Verlag, Berlin, 2004.

\bibitem{edm_trieb_book} D.E. Edmunds, H. Triebel, {\it Function spaces,
entropy numbers, differential operators}. Cambridge Tracts in
Mathematics, {\bf 120} (1996). Cambridge University Press.

\bibitem{el_kolli} A. El Kolli, ``$n$-i\`{e}me \'{e}paisseur dans les espaces de Sobolev'',
{\it J. Approx. Theory}, {\bf 10} (1974), 268--294.

\bibitem{ev_har_lang} W.D. Evans, D.J. Harris, J. Lang, ``The approximation numbers
of Hardy-type operators on trees'', {\it Proc. London Math. Soc.}
{\bf (3) 83}:2 (2001), 390–418.

\bibitem{ev_har_pick} W.\,D.~Evans, D.\,J.~Harris, and L.~Pick, Weighted Hardy
and Poincar\'{e} inequalities on trees, J. London Math. Soc.
\textbf{52}, 
121--136 (1995).

\bibitem{hein1} H.\,P.~Heinig, Weighted norm inequalities for
certain integral operators, II, Proc. AMS. \textbf{95}, 
387--395 (1985).

\bibitem{bib_kashin} B.S. Kashin, ``The widths of certain finite-dimensional
sets and classes of smooth functions'', {\it Math. USSR-Izv.},
{\bf 11}:2 (1977), 317--333.

\bibitem{kudr_nik} L.D. Kudryavtsev and S.M. Nikol'skii, ``Spaces
of differentiable functions of several variables and imbedding
theorems,'' in Analysis-3 (VINITI, Moscow, 1988), Itogi Nauki
Tekh., Ser.: Sovrem. Probl. Mat., Fundam. Napravl. 26, pp. 5--157;
Engl. transl. in Analysis III (Springer, Berlin, 1991), Encycl.
Math. Sci. 26, pp. 1--140.

\bibitem{kufner} A. Kufner, {\it Weighted Sobolev spaces}. Teubner-Texte Math., 31.
Leipzig: Teubner, 1980.

\bibitem{leoni1} G. Leoni, {\it A first Course in Sobolev Spaces}. Graduate studies
in Mathematics, vol. 105. AMS, Providence, Rhode Island, 2009.

\bibitem{lifs_m} M.A. Lifshits, ``Bounds for entropy numbers for some critical
operators'', {\it Trans. Amer. Math. Soc.}, {\bf 364}:4 (2012),
1797–1813.

\bibitem{l_l} M.A. Lifshits, W. Linde, ``Compactness properties of weighted summation operators
on trees'', {\it Studia Math.}, {\bf 202}:1 (2011), 17--47.

\bibitem{l_l1} M.A. Lifshits, W. Linde, ``Compactness properties of weighted summation operators
on trees --- the critical case'', {\it Studia Math.}, {\bf 206}:1
(2011), 75--96.

\bibitem{liz_otel1} P.I. Lizorkin, M. Otelbaev, ``Estimates of
approximate numbers of the imbedding operators for spaces of
Sobolev type with weights'', {\it Trudy Mat. Inst. Steklova}, {\bf
170} (1984), 213--232 [{\it Proc. Steklov Inst. Math.}, {\bf 170}
(1987), 245--266].

\bibitem{myn_otel} K. Mynbaev, M. Otelbaev, {\it Weighted function spaces and the
spectrum of differential operators}. Nauka, Moscow, 1988.

\bibitem{otelbaev} M.O. Otelbaev, ``Estimates of the diameters
in the sense of Kolmogorov for a class of weighted spaces'', {\it
Dokl. Akad. Nauk SSSR}, {\bf 235}:6 (1977), 1270--1273 [Soviet
Math. Dokl.].

\bibitem{kniga_pinkusa} A. Pinkus, {\it $n$-widths in approximation theory.} Berlin: Springer, 1985.

\bibitem{resh1} Yu.G. Reshetnyak, ``Integral representations of
differentiable functions in domains with a nonsmooth boundary'',
{\it Sibirsk. Mat. Zh.}, {\bf 21}:6 (1980), 108--116 (in Russian).

\bibitem{resh2} Yu.G. Reshetnyak, ``A remark on integral representations
of differentiable functions of several variables'', {\it Sibirsk.
Mat. Zh.}, {\bf 25}:5 (1984), 198--200 (in Russian).

\bibitem{solomyak} M. Solomyak, ``On approximation of functions from Sobolev spaces on metric
graphs'', {\it J. Approx. Theory}, {\bf 121}:2 (2003), 199--219.

\bibitem{tikh_nvtp} V.M. Tikhomirov, {\it Some questions in approximation theory}.
Izdat. Moskov. Univ., Moscow, 1976 [in Russian].

\bibitem{itogi_nt} V.M. Tikhomirov, ``Theory of approximations''. In: {\it Current problems in
mathematics. Fundamental directions.} vol. 14. ({\it Itogi Nauki i
Tekhniki}) (Akad. Nauk SSSR, Vsesoyuz. Inst. Nauchn. i Tekhn.
Inform., Moscow, 1987), pp. 103--260 [Encycl. Math. Sci. vol. 14,
1990, pp. 93--243].

\bibitem{bibl6} V.M. Tikhomirov, ``Diameters of sets in functional spaces
and the theory of best approximations'', {\it Russian Math.
Surveys}, {\bf 15}:3 (1960), 75--111.

\bibitem{triebel} H. Triebel, {\it Interpolation theory, function spaces,
differential operators} (North-Holland Mathematical Library, 18,
North-Holland Publishing Co., Amsterdam–New York, 1978; Mir,
Moscow, 1980).

\bibitem{triebel1} H. Triebel, {\it Theory of function spaces III}. Birkh\"{a}user Verlag, Basel, 2006.

\bibitem{tr_jat} H. Triebel, ``Entropy and approximation numbers of limiting embeddings, an approach
via Hardy inequalities and quadratic forms'', {\it J. Approx.
Theory}, {\bf 164}:1 (2012), 31--46.

\bibitem{vas_john} A.A. Vasil'eva, ``Widths of weighted Sobolev classes on a John domain'',
{\it Proc. Steklov Inst. Math.}, {\bf 280} (2013), 91--119.

\bibitem{vas_vl_raspr} A.A. Vasil'eva, ``An embedding theorem
for weighted Sobolev classes on a John domain: case of weights
that are functions of a distance to a certain $h$-set'', {\it
Russ. J. Math. Phys.}, {\bf 20}:3 (2013), 360--373.

\bibitem{vas_vl_raspr2} A.A. Vasil'eva, ``Embedding theorem for weighted Sobolev
classes on a John domain with weights that are functions of the
distance to some $h$-set. II'', {\it Russ. J. Math. Phys.}, {\bf
21}:1 (2014), 112--122.

\bibitem{vas_hardy_tree} A.A. Vasil'eva, ``Estimates for norms of two-weighted summation operators
on a tree under some conditions on weights'', arXiv.org:1311.0375.

\bibitem{vas_width_raspr} A.A. Vasil'eva, ``Widths of function classes on sets with tree-like
structure'', arxiv.org:1312.7231.

\bibitem{vas_bes} A.A. Vasil'eva, ``Kolmogorov and linear
widths of the weighted Besov classes with singularity at the
origin'', {\it J. Appr. Theory}, {\bf 167} (2013), 1--41.

\bibitem{vas_sib_m_j} A.A. Vasil'eva, ``Some sufficient conditions for embedding a weighted Sobolev
class on a John domain'', {\it Siberian Math. J.}, to appear.

\bibitem{vas_sing} A.A. Vasil'eva, ``Widths of weighted Sobolev classes
on a John domain: strong singularity at a point'',  {\it Rev. Mat.
Compl.}, {\bf 27}:1 (2014), 167--212.

\end{Biblio}
\end{document}